\documentclass[12pt]{amsart}

\usepackage{amsmath}
\usepackage{amssymb}
\usepackage{amsthm}
\usepackage{hyperref}

\theoremstyle{plain}
\newtheorem{thm}{Theorem}[section]
\newtheorem{prop}[thm]{Proposition}
\newtheorem{lem}[thm]{Lemma}
\newtheorem{cor}[thm]{Corollary}
\newtheorem{fact}[thm]{Fact}

\newtheorem{maintheorem}{Theorem}

\theoremstyle{definition}
\newtheorem{defn}[thm]{Definition}
\newtheorem{rem}[thm]{Remark}

\newcommand{\sa}{\mathrm{sa}}
\newcommand{\cstar}{$\mathrm{C}^*$}
\newcommand{\wstar}{$\mathrm{W}^*$}
\newcommand{\AWstar}{$\mathrm{AW}^*$}
\newcommand{\Proj}{\operatorname{Proj}}
\newcommand{\supp}{\operatorname{supp}}
\newcommand{\fin}{\Subset}

\begin{document}

\title[Transfinite Christensen-Pedersen]{Revisiting the Transfinite Christensen-Pedersen Argument}

\author{Jananan Arulseelan}
\address{Department of Mathematics, Iowa State University, 396 Carver Hall, 411 Morrill Road, Ames, IA 50011, USA}
\email{jananan@iastate.edu}
\urladdr{https://sites.google.com/view/jananan-arulseelan}

\subjclass[2020]{46L05, 46L10}
\keywords{\AWstar-algebra, normality, transfinite dilation, join-preserving homomorphism, order continuity, center-valued dimension}

\begin{abstract}
Christensen and Pedersen proved that every properly infinite \AWstar-algebra is monotone sequentially complete, and Sait\^{o} and Wright developed a transfinite form of their dilation argument. We revisit the transfinite construction using normality of \AWstar-algebras. Normality simplifies the limit stages by turning suprema into compressions of joins, so the construction only needs a supply of fresh orthogonal projections large enough to contain the supports of the summands at successor stages. We use this simplified proof to show that a $*$-homomorphism between \AWstar-algebras that preserves only the joins needed to encode such a sum preserves the sum itself.  We also use it to deduce order-continuity facts about $\kappa$-join-preserving $*$-homomorphisms.  We also show that a finite \AWstar-algebra has suprema for all bounded positive families whose supports have bounded total center-valued dimension.
\end{abstract}

\maketitle

\section{Introduction}

Christensen and Pedersen proved that every properly infinite \AWstar-algebra is monotone sequentially complete~\cite{CP84}.  Their proof dilates a bounded positive sequence to pairwise orthogonal projections, and then uses addability of partial isometries together with a perturbation argument to recover the least upper bound.  Sait\^{o} and Wright developed a transfinite version of this method~\cite[Sections 2 and 3]{SW91}: an \AWstar-algebra with an $\Omega$-indexed system of matrix units $(e_{ij})$ satisfying $\sum_i e_{ii} = 1$ and $e_{00} \sim 1$ has suprema for all $\Omega$-indexed positive families with bounded finite subsums~\cite[Theorem 2.5]{SW91}.  Their proof alternates between transfinite dilation and completeness at smaller ordinals, and it is the basis of their theorem that every \AWstar-factor is normal~\cite[Corollary 4.7]{SW91}.

We revisit the construction from the opposite direction.  An \AWstar-algebra is \textbf{normal} if the join of every upward directed family of projections is its supremum in the self-adjoint order.  Every \AWstar-algebra is normal by~\cite[Theorem A]{AH26}.  In a normal \AWstar-algebra, if $(q_i)_{i \in I}$ is an orthogonal family of projections with join $q$ and $E$ is a projection, then
\[
E q E = \sup_{F \fin I}\ \sum_{i \in F}E q_i E .
\]
This identity is used at every limit stage of a transfinite dilation and again at its end.  Once it is available, what remains is the question of where to place the next dilating projection.  The rotation step of~\cite{CP84}, together with the support calculation below, applied to $x_i \in (EME)_+$ produces a positive contraction $z_i$ with $E z_i E = x_i$ and $\supp(z_i)\sim\supp(x_i)$, so the $i$-th stage only needs fresh orthogonal space large enough to contain a copy of $\supp(x_i)$.

\begin{defn}\label{defn:supply}
    Let $M$ be an \AWstar-algebra, let $E \in \Proj(M)$, and let $(x_i)_{i \in I}$ be positive elements of $EME$.  A \textbf{projection-supply} for $(x_i)$ is a family $(R_i)_{i \in I}$ of pairwise orthogonal projections in $M$, each orthogonal to $E$, with $\supp(x_i) \precsim R_i$ for every $i$.
\end{defn}

\begin{maintheorem}[Projection-supply dilation]\label{mainthm:A}
    Let $M$ be a normal \AWstar-algebra, let $E \in \Proj(M)$, and let $(x_i)_{i \in I}$ be positive elements of $EME$ with $\sum_{i \in F}x_i \leq \tfrac{1}{2} E$ for every finite $F \subseteq I$.  If $(x_i)$ has a projection-supply, then there are pairwise orthogonal projections $(q_i)_{i \in I}$ in $M$ with
    \[
        E q_i E = x_i,\qquad q_i \sim \supp(x_i) \qquad (i \in I),
    \]
    and with $q = \bigvee_i q_i$, the element $E q E$ is the supremum of the finite subsums of $(x_i)$ in $M_{\sa}$.
\end{maintheorem}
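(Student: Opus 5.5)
We will well-order $I$ as an ordinal $\Omega$ (so $I=\{i_\alpha:\alpha<\Omega\}$; write $x_\alpha$, $R_\alpha$) and construct the $q_\alpha$ by transfinite recursion.

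\textbf{Invariant.} At stage $\alpha$ we will have pairwise orthogonal projections $(q_\beta)_{\beta<\alpha}$ with three properties:
\begin{itemize}
\item[(i)] $Eq_\beta E=x_\beta$ and $q_\beta\sim\supp(x_\beta)$;
\item[(ii)] $q_\beta\le E\vee R_\beta$;
\item[(iii)] with $Q_\alpha=\bigvee_{\beta<\alpha}q_\beta$, we have $EQ_\alpha E=\sup_{F\fin\alpha}\sum_{\beta\in F}x_\beta=:s_\alpha$.
\end{itemize}

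\textbf{Limit stages and the end.} For limit $\alpha$, property (iii) follows directly from normality via the identity in the introduction. The point is that the family $(q_\beta)_{\beta<\alpha}$ is orthogonal, so $EQ_\alpha E=\sup_F\sum_{\beta\in F}Eq_\beta E$. The same identity at $\alpha=\Omega$ gives the final assertion, so the only real work is at successor stages.

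\textbf{Successor step.} Suppose $(q_\beta)_{\beta<\alpha}$ has been built. We need a projection $q_\alpha$ orthogonal to $Q_\alpha$ with $Eq_\alpha E=x_\alpha$.

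Since $\sum_F x_\beta\le\tfrac12E$ for every finite $F$, we get $s_\alpha\le\tfrac12E$ and $s_\alpha+x_\alpha\le\tfrac12E$, using that the supremum respects adding $x_\alpha$. Note that $EQ_\alpha E=s_\alpha$ controls how much of $E$ the projection $Q_\alpha$ occupies.

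Following the rotation step of \cite{CP84}, I would first produce a positive contraction $z$ with $Ez E=x_\alpha$ and $\supp(z)\sim\supp(x_\alpha)$. Then I would conjugate it by a partial isometry built from $v$, where $v^*v=\supp(x_\alpha)$ and $vv^*\le R_\alpha$, which is available from the projection-supply hypothesis. This moves the "off-$E$" part of $z$ into $R_\alpha$. The factor $\tfrac12$ lets $x_\alpha$ be written as $a^*a$ with $a=\sqrt{2x_\alpha}$-type scaling, leaving room for the projection to have entries of size $x_\alpha^{1/2}(1-x_\alpha)^{1/2}$ in the corner $R_\alpha ME$.

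Concretely, I will try
\[
q_\alpha=\begin{pmatrix} x_\alpha & x_\alpha^{1/2}(E-x_\alpha)^{1/2}v^*\\ v(E-x_\alpha)^{1/2}x_\alpha^{1/2} & v(E-x_\alpha)v^*\end{pmatrix}
\]
relative to $E\oplus R_\alpha$. This is a projection with support equivalent to $\supp(x_\alpha)$, and it satisfies (i) and (ii) automatically.

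\textbf{Main obstacle: orthogonality to $Q_\alpha$.} The $R$-parts are disjoint, because by (ii) and the pairwise orthogonality of the $R_\beta$, the projection $Q_\alpha$ lives in $E\vee\bigvee_{\beta<\alpha}R_\beta$. However, the $E$-parts $x_\alpha$ and $EQ_\alpha E$ overlap, so the naive choice above is not orthogonal to $Q_\alpha$.

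To fix this, I will instead dilate inside the projection $P=1-Q_\alpha$. The compression $EPE=E-s_\alpha\ge\tfrac12E\ge x_\alpha$, so $x_\alpha\le EPE$. This means $x_\alpha=EPyPE$ for the positive contraction $y=(PEP)^{\dagger1/2}\,\cdots$ obtained by solving $c^*c=x_\alpha$ with $c\in PME$ and $\|c\|\le1$; existence of $c$ is a polar-decomposition/factorization fact in \AWstar-algebras, given $x_\alpha\le EPE$. I then apply the rotation above to $cc^*$-type data inside $PMP$, using $R_\alpha\le P$ as fresh space. The care needed is twofold: keeping $\supp$ equivalences under these factorizations (via the support calculation), and checking that $R_\alpha\perp Q_\alpha$. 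The latter follows from (ii) together with $R_\alpha\perp E$ and $R_\alpha\perp R_\beta$ for $\beta<\alpha$.
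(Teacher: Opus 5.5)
Your architecture matches the paper's: transfinite recursion, normality at limit stages and at the end, and a Halmos dilation into the fresh space $R_\alpha$. However, the successor step, which you correctly identify as the only real work, is not carried out, and the bookkeeping around it is wrong.

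First, invariant (ii) cannot be maintained. Suppose $q_\alpha\le E+R_\alpha$ and $q_\beta\le E+R_\beta$, with $R_\alpha\perp R_\beta$ and both orthogonal to $E$. Then $q_\alpha q_\beta=q_\alpha Eq_\beta$, so orthogonality forces $x_\alpha x_\beta=Eq_\alpha q_\beta E=0$. This already fails for $x_\alpha=x_\beta=\tfrac14E$. In general a $q_\alpha$ orthogonal to $Q_\alpha$ must use the space $\bigvee_{\beta<\alpha}R_\beta$ already touched by earlier stages. The correct invariant is $q_\beta\le E\vee\bigvee_{\gamma\le\beta}R_\gamma$. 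From it, $R_\alpha\perp Q_\alpha$ follows by Lemma~\ref{lem:annihilator}, and your justification of $R_\alpha\perp Q_\alpha$ via (ii) must be replaced by this.

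Second, the element to be dilated is never actually produced, and the condition you state is not the right one. A $c\in PME$ with $c^*c=x_\alpha$ need not give $E\,cc^*E=x_\alpha$: for $c=PE(EPE)^{-1/2}x_\alpha^{1/2}$ one gets $(EPE)^{1/2}x_\alpha(EPE)^{1/2}$ instead. What is needed is $c=Pc$, $\|c\|\le1$ and $Ec=x_\alpha^{1/2}$. No factorization theorem is required, because $EPE=E-s_\alpha\ge\tfrac12E$ is invertible in $EME$, so one can set
\[
c=PE(EPE)^{-1}x_\alpha^{1/2},\qquad z=cc^* .
\]
This $z$ has the required properties:
\begin{itemize}
\item $EzE=x_\alpha$, since $Ec=x_\alpha^{1/2}$.
\item $zQ_\alpha=0$, since $c=Pc$.
\item $\|c^*c\|=\|x_\alpha^{1/2}(EPE)^{-1}x_\alpha^{1/2}\|\le1$, since $x_\alpha\le EPE$.
\item $\supp(z)\sim\supp(x_\alpha)$, since $c$ has the same right annihilator as $x_\alpha^{1/2}=Ec$.
\end{itemize}
This is precisely the Christensen--Pedersen rotation of Lemma~\ref{lem:rotation}, where $(1-a)E=PE(EPE)^{-1}$.

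Third, you do not address that the Halmos dilation requires $\supp(z)$ to be orthogonal to the target $f\le R_\alpha$ of the partial isometry. This does hold here: the left support of $PE$ lies under $E\vee Q_\alpha\le E\vee\bigvee_{\beta<\alpha}R_\beta$, which is the paper's condition $\supp(z)\le P_i$.

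With these repairs, dilate $z$ (not $x_\alpha$, as in your first matrix) via a partial isometry $w$ with $w^*w=\supp(z)$ and $ww^*\le R_\alpha$. This gives the paper's proof.
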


When each $R_i$ contains a copy of $E$, the existence of the supremum also follows from~\cite[Theorem 2.5]{SW91} applied to a homogeneous corner.  The projection-supply condition is weaker when the supports of the $x_i$ are small compared with $E$.  This is the situation in a finite \AWstar-algebra, where no nonzero projection has infinitely many orthogonal copies.  There, the center-valued dimension can place small supports into a projection-supply.

Theorem~\ref{mainthm:A} does more than produce a supremum. It represents the supremum as the compression of a join of orthogonal projections, thus encoding it as projection-lattice data.  A $*$-homomorphism $\pi: M \to N$ between \AWstar-algebras is \textbf{$\kappa$-join-preserving} if $\pi(\bigvee_i p_i) = \bigvee_i \pi(p_i)$ for every orthogonal family of at most $\kappa$ projections in $M$.  Homomorphisms preserving joins of all orthogonal families are classical in the theory of \AWstar-algebras~\cite{Wid56}; see~\cite{HL} for a recent categorical treatment.  Our main application shows that only the joins needed to encode a given order sum need be preserved.

\begin{maintheorem}\label{mainthm:B}
    Let $M$ and $N$ be \AWstar-algebras, let $E \in \Proj(M)$, and let $(x_i)_{i \in I}$ be positive elements of $EME$ with uniformly norm-bounded finite subsums.  If $(x_i)$ has a projection-supply and $\pi: M \to N$ is an $|I|$-join-preserving $*$-homomorphism, then
    \[
        \pi\Bigl(\sup_{F \fin I}\sum_{i \in F}x_i\Bigr) = \sup_{N,\;F \fin I}\ \sum_{i \in F}\pi(x_i).
    \]
\end{maintheorem}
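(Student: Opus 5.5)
We plan to reduce Theorem~\ref{mainthm:B} to Theorem~\ref{mainthm:A}, using normality of every \AWstar-algebra~\cite[Theorem A]{AH26}. First we rescale. If the finite subsums are bounded by $C>0$, replace $x_i$ by $x_i/(2C)$. Supports are unchanged, so the projection-supply is unchanged, and both sides of the identity scale linearly, since $\pi$ is linear and positive. So we may assume $\sum_{i\in F}x_i\le \tfrac12 E$ for all finite $F$. Theorem~\ref{mainthm:A} then gives pairwise orthogonal projections $(q_i)$ with $Eq_iE=x_i$. With $q=\bigvee_i q_i$, we get $s:=\sup_F\sum_{i\in F}x_i = EqE$.

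Next we apply $\pi$. Since $(q_i)$ is an orthogonal family indexed by $I$, $|I|$-join-preservation gives $\pi(q)=\bigvee_i\pi(q_i)$. Because $\pi$ is a $*$-homomorphism, the $\pi(q_i)$ are pairwise orthogonal projections in $N$, $\pi(E)$ is a projection, and $\pi(E)\pi(q_i)\pi(E)=\pi(x_i)$. Therefore
\[
\pi(s)=\pi(E)\,\pi(q)\,\pi(E)=\pi(E)\Bigl(\bigvee_i\pi(q_i)\Bigr)\pi(E).
\]

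It remains to identify the right-hand side as a supremum in $N_{\sa}$. This is the normality identity from the introduction, now applied in $N$ with the projection $\pi(E)$ and the orthogonal family $(\pi(q_i))$:
\[
\pi(E)\Bigl(\bigvee_i\pi(q_i)\Bigr)\pi(E)=\sup_{F\fin I}\sum_{i\in F}\pi(E)\pi(q_i)\pi(E)=\sup_{N,\,F\fin I}\sum_{i\in F}\pi(x_i).
\]
Undoing the scaling gives the stated equality. No supply hypothesis is needed in $N$, because the dilation has already been done in $M$ and transported by $\pi$.

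The main point to verify is the compression identity. Its proof in the paper presumably goes as follows. The finite joins $\sum_{i\in F}q_i$ form an upward directed family of projections with join $q$. By normality, $q$ is their supremum in $N_{\sa}$. Compression $y\mapsto EyE$ preserves this supremum: $EqE$ is an upper bound, and one must show it is the least one. One obstacle is that a general upper bound $b$ of the compressions need not be of the form $EbE$. The usual trick, which is where normality genuinely enters, is to show that $\sup_F E p_F E$ exists and equals $EqE$ by testing against $b$ and using $E+\lambda(1-E)$-type perturbations. I take this identity as established earlier in the paper, as the introduction asserts. The remaining ingredients of the reduction are routine.
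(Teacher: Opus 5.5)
Your proposal is correct and is essentially the paper's proof. You rescale to $\sum_{i\in F}x_i\le\tfrac12E$, dilate via Theorem~\ref{mainthm:A} (using normality of $M$ from \cite[Theorem A]{AH26}), push the join through $\pi$, and apply Proposition~\ref{prop:joinsup} in the normal algebra $N$ with the projection $\pi(E)$. One correction to your aside: normality enters only to get $q=\sup_F\sum_{i\in F}q_i$, while the step from the compression to a supremum in all of $N_{\sa}$ (Lemmas~\ref{lem:compression} and~\ref{lem:corner}, the latter using the $\lambda(1-E)$ perturbation) is a pure \cstar-algebra argument.
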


Section~\ref{sec:homomorphisms} derives cardinal criteria for order continuity.  Let $\pi$ be a $*$-homomorphism from a properly infinite \AWstar-algebra into an \AWstar-algebra.  If $\pi$ preserves joins of countable orthogonal families, then it preserves suprema of bounded increasing sequences (Corollary~\ref{cor:homogeneous}).  
In the von Neumann setting, $*$-homomorphisms preserving joins of orthogonal sequences of projections were studied by Bunce and Hamhalter~\cite{BuH95}, and Str\u{a}til\u{a} and Zsid\'{o} show that a countably additive positive map from a monotone $\sigma$-complete \cstar-algebra into a \wstar-algebra is sequentially normal~\cite[9.18]{SZ}.  For properly infinite domains, Corollary~\ref{cor:homogeneous} extends this to codomains that are arbitrary \AWstar-algebras, which need not admit any normal functional.  More generally, if $M$ is $\kappa$-homogeneous, every $\kappa$-join-preserving $*$-homomorphism preserves suprema of bounded increasing nets of cardinality at most $\kappa$, and if $M$ also admits $\kappa$ separating ordinary states, it actually preserves all suprema (Corollary~\ref{cor:normalhom}).  Corollary~\ref{cor:subalgebra} gives criteria under which an \AWstar-subalgebra is monotone closed in an arbitrary \AWstar-algebra; cf. Hamana's study of \AWstar-subalgebras of type $\mathrm{I}$ \AWstar-algebras~\cite{Ham83}.  For finite domains, join-preserving $*$-homomorphisms into arbitrary \AWstar-algebras preserve the suprema of families whose support dimensions fit inside the complement of their joint support (Corollary~\ref{cor:finitesource}).  Here the codomain need not be finite  To the best of our knowledge, the cardinal-local order-continuity statements of Section~\ref{sec:homomorphisms} do not appear in the literature.

Theorem~\ref{mainthm:A} also gives an existence result over arbitrary centers.

\begin{maintheorem}\label{mainthm:C}
    Let $M$ be a finite \AWstar-algebra with normalized center-valued dimension $\Delta_M$, and let $(x_i)_{i \in I}\subseteq M_+$ have uniformly norm-bounded finite subsums.  If
    \[
        \sup_{F \fin I}\ \sum_{i \in F}\Delta_M(\supp x_i) < \infty,
    \]
    then the finite subsums of $(x_i)$ have a supremum in $M_{\sa}$.
\end{maintheorem}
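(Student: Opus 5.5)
The plan is to reduce Theorem~\ref{mainthm:C} to Theorem~\ref{mainthm:A} by passing to a matrix amplification of $M$. There the corner $E$ is a copy of $M$, and its complement has enough center-valued dimension to host a projection-supply.

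\textbf{Normalization and amplification.} After rescaling, assume $\sum_{i\in F}x_i\le \tfrac12\cdot 1$ for all finite $F$. Let $C=\sup_F\sum_{i\in F}\Delta_M(\supp x_i)$, and choose an integer $n\ge C+1$. Set $N=M_n(M)$, which is again a finite \AWstar-algebra, and normal by~\cite[Theorem A]{AH26}. Let $E=e_{11}\otimes 1$, so that $ENE\cong M$ and the $x_i\otimes e_{11}$ are positive elements of $ENE$ with subsums $\le\tfrac12 E$. The center of $N$ is identified with $Z(M)$. Under this identification $\Delta_N(p\otimes e_{11})=\Delta_M(p)/n$ and $\Delta_N(1-E)=(n-1)/n\ge C/n$.

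\textbf{Construction of the supply.} Well-order $I$ and build pairwise orthogonal projections $R_i\le 1-E$ with $R_i\sim \supp(x_i)\otimes e_{11}$ by transfinite recursion. Suppose stage $i$ is reached and the $R_j$, $j<i$, are already chosen. Let $Q_i=(1-E)-\bigvee_{j<i}R_j$. Complete additivity of the center-valued dimension on orthogonal families in a finite \AWstar-algebra gives
\[
\Delta_N(Q_i)=\tfrac{n-1}{n}-\sup_{F\fin\{j<i\}}\sum_{j\in F}\tfrac1n\Delta_M(\supp x_j)\ \ge\ \tfrac1n\bigl(C-\textstyle\sum_{j<i}\Delta_M(\supp x_j)\bigr)+\tfrac1n .
\]
This is at least $\Delta_N(\supp(x_i)\otimes e_{11})$, because $\sum_{j\le i}\Delta_M(\supp x_j)\le C$. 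The comparison theorem ($\Delta(p)\le\Delta(r)\Rightarrow p\precsim r$ in finite \AWstar-algebras) then gives a subprojection $R_i\le Q_i$ equivalent to $\supp(x_i)\otimes e_{11}$. This produces a projection-supply in the sense of Definition~\ref{defn:supply}.

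\textbf{Conclusion.} Theorem~\ref{mainthm:A} now yields $s=EqE\in ENE\cong M$, the supremum of the finite subsums of $(x_i\otimes e_{11})$ in $N_{\sa}$. We check that $s$ is the supremum in $M_{\sa}$. It is an upper bound lying in the corner. If $b\in M_{\sa}$ is any upper bound of the subsums, then $b\otimes e_{11}$ is an upper bound in $N_{\sa}$, so $s\le b\otimes e_{11}$, and hence $s\le b$ in $M$. Undoing the rescaling finishes the proof.

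\textbf{Main obstacle.} I expect the delicate step to be the transfinite placement, specifically the limit stages. There one needs the exact identity $\Delta_N(\bigvee_{j<i}R_j)=\sup_F\sum_{j\in F}\Delta_N(R_j)$, i.e.\ complete (not merely finite) additivity of $\Delta$ in a finite \AWstar-algebra. I would cite this from the standard construction of the center-valued dimension. I would also keep the margin $1/n$ in the dimension estimate, so that the comparison step never requires strict inequalities in the center.
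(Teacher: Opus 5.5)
Your proof is correct and is essentially the paper's argument. You amplify to $M_n(M)$, place copies of the supports inside $1-E$ by transfinite packing using complete additivity of $\Delta$ (this is the paper's Lemma~\ref{lem:packing}), apply Theorem~\ref{mainthm:A}, and read the supremum back through the corner.

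There are two minor points:
\begin{itemize}
\item With $n\geq C+1$ you only get $\tfrac{n-1}{n}\geq\tfrac{C}{n}$, so the extra $+\tfrac1n$ in your display does not follow. It is also unnecessary, since the comparison step only needs $\Delta(p)\leq\Delta(r)$.
\item For normality the paper cites Wright's theorem that finite \AWstar-algebras are normal~\cite{Wri80}, rather than~\cite{AH26}. This keeps Theorem~\ref{mainthm:C} independent of~\cite{SW91}; your citation is valid but gives up that independence.
\end{itemize}
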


Finite factors also admit a metric proof; see Haagerup~\cite[Prop.~3.10]{Haa14}.  We give a projection-geometric proof that works over arbitrary centers.

Section~\ref{sec:cardinal} collects the cardinal completeness results used in Section~\ref{sec:homomorphisms}.  These include the cardinal form of the passage from summable families to monotone nets due to Sait\^{o} and Wright, a state-selection principle reducing an arbitrary bounded increasing net to a directed subfamily of cardinality at most $\kappa$ with the same supremum, and a criterion for homogeneity of \AWstar-factors with a faithful-state corner.

Theorem~\ref{mainthm:A} is applied in Section~\ref{sec:finite} only to finite \AWstar-algebras. These are normal by~\cite{Wri80}, so Theorem~\ref{mainthm:C} and its consequences do not depend on~\cite{SW91} or~\cite{AH26}.  Theorem~\ref{mainthm:B} uses normality of the domain and codomain, for which we invoke~\cite[Theorem A]{AH26}, which is proved using the normality of \AWstar-factors~\cite[Corollary 4.7]{SW91}.

\section{Preliminaries}\label{sec:prelim}

Throughout, suprema are taken in the self-adjoint order unless a corner is explicitly indicated.  For a projection $e$ in a unital \cstar-algebra $A$, the corner $eAe$ is regarded as a unital \cstar-algebra with unit $e$.  If $M$ is an \AWstar-algebra and $e \in \Proj(M)$, then $eMe$ is an \AWstar-algebra, and joins of subprojections of $e$ computed in $eMe$ agree with those computed in $M$~\cite{Ber72}.  For $x \in M$ we write $\supp(x)$ for the right support of $x$; for positive $x$ it is the least projection $p$ with $xp = x$.  An \textbf{ordinary state} is a state in the usual sense; no state in this paper is assumed to be normal.

\begin{lem}\label{lem:sumsup}
    Let $(a_i)$ and $(b_i)$ be increasing nets in $A_{\sa}$ indexed by the same directed set.  If $a = \sup_i a_i$ and $b = \sup_i b_i$ exist, then $\sup_i(a_i + b_i) = a + b$.
\end{lem}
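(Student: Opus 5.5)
The plan is to verify directly that $a+b$ is the least upper bound of the net $(a_i+b_i)$ in $A_{\sa}$. The upper-bound half is immediate: since $a_i \le a$ and $b_i \le b$ for every $i$, we get $a_i + b_i \le a + b$. The real content is minimality. So we fix an arbitrary upper bound $c \in A_{\sa}$ with $a_i + b_i \le c$ for all $i$, and aim to show $a + b \le c$.

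The natural move is to peel off one net at a time, using monotonicity to decouple the indices. Fix an index $j$. For every $i \ge j$ we have $b_j \le b_i$, so
\[
a_i + b_j \le a_i + b_i \le c \qquad (i \ge j),
\]
that is, $a_i \le c - b_j$ for all $i \ge j$. Because the net $(a_i)$ is increasing and the index set is directed, every $a_k$ lies below some $a_i$ with $i \ge j$ (choose $i$ dominating both $k$ and $j$). Hence $c - b_j$ is an upper bound for the whole net $(a_i)$, and minimality of $a$ gives $a \le c - b_j$, i.e.\ $b_j \le c - a$. Since $j$ was arbitrary, $c - a$ is an upper bound for $(b_j)$, so $b \le c - a$, which rearranges to $a + b \le c$.

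The only point requiring care is the cofinality step. An upper bound for the tail $\{a_i : i \ge j\}$ must be shown to be an upper bound for the entire net; this is exactly where both directedness and the monotonicity of $(a_i)$ are used. Beyond that, the argument uses only that the order on $A_{\sa}$ is translation-invariant, so that $x \le y$ implies $x + z \le y + z$. No \AWstar-structure or normality is needed, and the lemma holds in any unital \cstar-algebra $A$, as stated.
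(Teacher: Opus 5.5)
Your proof is correct and follows essentially the same route as the paper: fix an index $j$, use monotonicity of $(b_i)$ on the tail $i \geq j$ to get $a_i + b_j \leq c$, pass to the supremum over that cofinal tail to obtain $a + b_j \leq c$, and then take the supremum over $j$ to conclude $a + b \leq c$. You spell out the cofinality and translation-invariance steps that the paper leaves implicit, but the argument is the same.
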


\begin{proof}
    The element $a+b$ is an upper bound.  If $c$ is another upper bound and $\eta$ is fixed, then $a_\xi+b_\eta \leq  a_\xi+b_\xi \leq  c$ for $\xi \geq \eta$.  Taking the supremum over this cofinal tail gives $a+b_\eta \leq c$, and taking the supremum over $\eta$ gives $a+b \leq c$.
\end{proof}

Taking one net constant gives
\begin{equation}\label{eq:translate}
    \sup_i(a_i + c) = \Bigl(\sup_i a_i \Bigr) + c .
\end{equation}

\begin{lem}\label{lem:compression}
    Let $e \in A$ be a projection and let $(a_i)$ be an increasing net in $A_{\sa}$ with $a = \sup_i a_i$.  Then $eae$ is the supremum of $(e a_i e)$ in $eAe$.
\end{lem}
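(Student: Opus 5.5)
The plan is to check directly that $eae$ is the least upper bound of $(ea_ie)$ inside $eAe$. The upper bound part is immediate: since $a_i \leq a$ for every $i$ and compression by $e$ preserves order, we get $e a_i e \leq e a e$.

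For minimality, fix an upper bound $b = b^* \in eAe$ of the net $(e a_i e)$. We want to show $eae \leq b$. The difficulty is that $b$ only controls the $e$-corner of each $a_i$, whereas $a$ is the supremum in all of $A_{\sa}$. So we must manufacture an upper bound for the $a_i$ in $A_{\sa}$ whose $e$-corner is close to $b$.

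The standard device is to dominate the off-diagonal terms by a large multiple of $e^\perp = 1-e$. First, choose an index $i_0$ and restrict to the tail $i \geq i_0$; this does not change the supremum. On that tail, $a_{i_0} \leq a_i \leq a$, so $\|a_i\| \leq C := \max(\|a_{i_0}\|, \|a\|)$. For any $t>0$ and $x$ self-adjoint, expanding $(t^{1/2} e - t^{-1/2} e^\perp)x(t^{1/2}e - t^{-1/2}e^\perp)\geq$ is awkward. Instead we use the operator inequality
\[
x = exe + ex e^\perp + e^\perp x e + e^\perp x e^\perp \leq (1+\varepsilon) exe + (1+\varepsilon^{-1}) e^\perp x e^\perp
\]
in its positive-part form. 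To make this legitimate, add $C$: the element $y = x + C \geq 0$, and for positive $y$ one has $y \leq (1+\varepsilon) eye + (1+\varepsilon^{-1}) e^\perp y e^\perp$. This follows because $(\varepsilon^{1/2} e - \varepsilon^{-1/2}e^\perp)\,y\,(\varepsilon^{1/2}e - \varepsilon^{-1/2}e^\perp) \geq 0$.

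Applying this with $y_i = a_i + C$ and using $\|y_i\| \leq 2C$ gives
\[
a_i + C \leq (1+\varepsilon)(e a_i e + Ce) + (1+\varepsilon^{-1}) 2C e^\perp \leq (1+\varepsilon)(b + Ce) + (1+\varepsilon^{-1})2C e^\perp .
\]
The right-hand side is a fixed self-adjoint element, so it bounds every $a_i + C$. Hence it also bounds $\sup_i(a_i + C) = a + C$, using~\eqref{eq:translate}.

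Now compress by $e$, which kills the $e^\perp$ term:
\[
eae + Ce \leq (1+\varepsilon)(b + Ce).
\]
Letting $\varepsilon \to 0$ and using that the positive cone is norm closed gives $eae \leq b$, as required.

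The main obstacle is exactly this step of lifting an upper bound in the corner to a global one. It is resolved by the $\varepsilon$-weighted Cauchy--Schwarz inequality above, combined with the uniform norm bound that comes from passing to a tail of the net.
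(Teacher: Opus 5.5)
Your proof is correct, but it takes a different route from the paper. The paper never deals with the off-diagonal parts of the $a_i$. It writes the pinching $\Phi(x)=exe+(1-e)x(1-e)$ as the average $\tfrac12(x+uxu^*)$ with the symmetry $u=2e-1$. Lemma~\ref{lem:sumsup}, together with the fact that $\operatorname{Ad}u$ is an order automorphism, then gives $\sup_i\Phi(a_i)=\Phi(a)$. Since each $\Phi(a_i)$ is block diagonal, $b+(1-e)a(1-e)$ is an exact upper bound for all of them, hence dominates $\Phi(a)$, and comparing $e$-corners gives $b\ge eae$ at once.

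You instead bound the net $(a_i)$ itself, absorbing the cross terms with the weighted Cauchy--Schwarz inequality $y\le(1+\varepsilon)eye+(1+\varepsilon^{-1})e^\perp ye^\perp$ for $y\ge0$. This forces your extra steps: passing to a tail to get $\lVert a_i\rVert\le C$, shifting by $C$ to make the $y_i$ positive, and letting $\varepsilon\to0$ using norm-closedness of the positive cone. All of these check out. The paper's argument is shorter and purely order-theoretic, with no norm estimates and no limit. Yours trades Lemma~\ref{lem:sumsup} and the symmetry trick for a direct, hands-on lifting of the corner bound to approximate global bounds.

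As a matter of presentation, state your first displayed inequality only for positive elements, since for general self-adjoint $x$ it fails (e.g.\ for $x=-1$). Also delete the unfinished sentence about $(t^{1/2}e-t^{-1/2}e^\perp)x(\cdots)$.
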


\begin{proof}
    Put $u = 2e-1$ and $\Phi(x) = \tfrac{1}{2}(x+uxu^*) = exe + (1-e)x(1-e)$.  Since $\operatorname{Ad}u$ is an order automorphism, Lemma~\ref{lem:sumsup} gives $\sup_i \Phi(a_i) = \Phi(a)$.  If $b \in (eAe)_{\sa}$ dominates every $e a_i e$, then $c = b + (1-e)a(1-e)$ dominates every $\Phi(a_i)$, hence $c \geq \Phi(a)$, and comparing $e$-corners gives $b \geq eae$.
\end{proof}

\begin{lem}\label{lem:corner}
    Let $e \in A$ be a projection and let $S \subseteq (eAe)_{\sa}$ have a supremum $s$ in $eAe$.  Then $s$ is the supremum of $S$ in $A_{\sa}$.
\end{lem}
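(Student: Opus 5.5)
We must show that $s$ is an upper bound for $S$ in $A_{\sa}$ and lies below every upper bound $c\in A_{\sa}$ of $S$. The first part is immediate: for $t\in S$ we have $t\le s$ in $eAe$, and the order of $eAe$ is the restriction of the order of $A$, since positivity of an element of $eAe$ does not depend on which of the two unital \cstar-algebras it is viewed in. The work is in the second part, because $c$ need not lie in $eAe$. Compressing $c$ to $ece$ does give an upper bound in $eAe$, but it yields only $s\le ece$, which does not imply $s\le c$.

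The plan is to convert $c$ into an upper bound that lives in $eAe$ and is no larger than $c$ on the relevant vectors. I will represent $A$ faithfully on a Hilbert space, so that $A_{\sa}$ order is tested by vector states. I will write $c$ in $2\times 2$ block form relative to $e$ and $1-e$:
\[
c=\begin{pmatrix} c_{11} & c_{12}\\ c_{12}^* & c_{22}\end{pmatrix}.
\]
For every $t\in S$, the matrix $c-t$ has blocks $c_{11}-t$, $c_{12}$ and $c_{22}$, and it is positive.

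First I record that $S$ can be taken nonempty. If $S=\emptyset$, then $s$ is the least self-adjoint element of $eAe$, which is impossible unless $eAe=0$, in which case $s=0$; this degenerate case is handled directly. Next, positivity of $c-t$ gives $c_{22}\ge 0$. The natural candidate for a reduced bound is the Schur complement $c_{11}-c_{12}c_{22}^{-1}c_{12}^*$, but $c_{22}$ need not be invertible. I therefore use the regularized version
\[
d_\varepsilon = c_{11}-c_{12}(c_{22}+\varepsilon)^{-1}c_{12}^*\in (eAe)_{\sa},\qquad \varepsilon>0,
\]
where the inverse is taken in $(1-e)A(1-e)$.

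The key steps, in order, are as follows.
\begin{enumerate}
\item Show $d_\varepsilon\ge t$ for each $t\in S$. Since $c-t\ge0$, the perturbed matrix $c-t+\varepsilon(1-e)$ is positive and its $(2,2)$ block is invertible, so the standard Schur complement criterion gives $c_{11}-t-c_{12}(c_{22}+\varepsilon)^{-1}c_{12}^*\ge0$.
\item Conclude $d_\varepsilon\ge s$ in $eAe$, since $d_\varepsilon$ is an upper bound for $S$ there.
\item Show $c-s\ge0$ by testing against vectors $\xi=\xi_1+\xi_2$, with $\xi_1$ in the range of $e$ and $\xi_2$ in the range of $1-e$. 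Write $s=d_\varepsilon-r$ with $r\ge 0$ in $eAe$, so that $c-s=(c-d_\varepsilon)+r$, where $c-d_\varepsilon$ has blocks $c_{12}(c_{22}+\varepsilon)^{-1}c_{12}^*$, $c_{12}$ and $c_{22}$.
\item Verify $c-d_\varepsilon+\varepsilon(1-e)\ge0$. This holds because this matrix factors as $X^*X$ with $X=\bigl((c_{22}+\varepsilon)^{-1/2}c_{12}^*,\ (c_{22}+\varepsilon)^{1/2}\bigr)$; one checks that $X^*X$ has $(2,1)$ block $c_{12}^*$.
\item Deduce $c-s+\varepsilon(1-e)\ge0$ for all $\varepsilon>0$, and let $\varepsilon\to0$ in norm to get $c\ge s$, using that the positive cone is closed.
\end{enumerate}

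I expect the main obstacle to be bookkeeping: getting the regularized Schur complement identity exactly right, in particular checking that the factorization $X^*X$ has the correct off-diagonal blocks, all within the corners of $A$ with no appeal to invertibility of $c_{22}$. Once that identity is in place, the argument is purely algebraic. Apart from the closing limit, it uses only positivity of the cone and the fact that $eAe$ and $A$ induce the same order on $eAe$.
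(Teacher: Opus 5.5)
Your proposal is correct and is essentially the paper's proof. The paper also forms the regularized Schur complement $d_\lambda=b_{11}-b_{12}(b_{22}+\lambda f)^{-1}b_{21}$, shows it dominates $S$ and hence $s$, deduces $b-s+\lambda f\ge0$, and lets $\lambda\to0$. The only difference is that it gets both your step~1 and your step~4 at once from the single congruence $w(b-y+\lambda f)w^*=(d_\lambda-y)+c_\lambda$ with $w=1-b_{12}c_\lambda^{-1}$ invertible, so no Hilbert space representation is needed.

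One correction: the case $S=\emptyset$, $e=0$ cannot be ``handled directly''. It is a genuine exception, since there $s=0$ while $\emptyset$ has no supremum in $A_{\sa}$ unless $A=0$. So nonemptiness of $S$ must be assumed, as the paper tacitly does when it uses some $y\in S$ to get $b_{22}\ge0$.
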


\begin{proof}
    Clearly $s$ is an upper bound.  Let $b \in A_{\sa}$ dominate $S$, put $f = 1-e$, and write $b_{11} = ebe$, $b_{12} = ebf = b_{21}^*$ and $b_{22} = fbf$.  Since $f(b-y)f = b_{22}$ for $y \in S$, we have $b_{22} \geq 0$.  For $\lambda > 0$ let $c_\lambda = b_{22} + \lambda f$, which is invertible in $fAf$, and put $n = b_{12} c_\lambda^{-1} \in eAf$.  Then $n^2 = 0$, so $w = 1-n$ is invertible, and for every $y \in (eAe)_{\sa}$
    \[
        w(b-y+\lambda f)w^* = \bigl(d_\lambda-y\bigr) + c_\lambda,\qquad d_\lambda = b_{11}-b_{12} c_\lambda^{-1}b_{21} \in (eAe)_{\sa} .
    \]
    For $y \in S$ the left side is positive, so $y \leq d_\lambda$.  Hence $s \leq d_\lambda$, the right side is positive for $y = s$, and $b - s + \lambda f \geq 0$.  Letting $\lambda \to 0$ gives $b \geq s$.
\end{proof}

\begin{lem}\label{lem:annihilator}
    Let $(e_i)_{i \in I}$ be projections in an \AWstar-algebra $M$ and put $e =  \bigvee_i e_i$.  If $z \in M$ satisfies $ze_i = 0$ for every $i$, then $ze = 0$.
\end{lem}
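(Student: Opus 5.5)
The plan is to apply the basic \AWstar-algebra fact that right annihilators are generated by projections. For any subset $S \subseteq M$, the right annihilator $\{y \in M : sy = 0 \text{ for all } s \in S\}$ equals $gM$ for a unique projection $g$. We use this with $S = \{z\}$, and separately with $S = \{z^*z\}$, reducing to the positive case if needed. The question then becomes whether every $e_i$ lies below the annihilating projection, and whether that passes to the join.

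Concretely, let $g$ be the projection with $\{y : zy = 0\} = gM$. Since $ze_i = 0$, we get $e_i \in gM$, so $e_i = g e_i$. Taking adjoints gives $e_i = e_i g$, hence $e_i \leq g$ for every $i$. By the definition of the join in the projection lattice, $e = \bigvee_i e_i \leq g$. Therefore $e = ge \in gM$, and since every element of $gM$ is annihilated by $z$, we conclude $ze = 0$.

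An alternative route uses the right support. One could set $p = \supp(z)$, so that $zy = 0$ if and only if $py = 0$, which gives $p e_i = 0$. That is, $e_i \leq 1-p$ for all $i$, hence $e \leq 1-p$ and $ze = zpe = 0$. The one point needing care is the equivalence $zy = 0 \iff \supp(z)y = 0$. This is exactly the \AWstar\ axiom that the right annihilator of $z$ is $(1-\supp(z))M$. It is a standard fact from Kaplansky's definition, and I would cite it rather than reprove it. No real obstacle arises beyond invoking this axiom correctly, since joins of projections are taken in the projection lattice, where the ordering argument is immediate.
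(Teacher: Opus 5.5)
Your main argument is exactly the paper's proof: take the projection $g$ with right annihilator of $\{z\}$ equal to $gM$, observe that each $e_i \leq g$, hence $e \leq g$ and $ze = 0$. The alternative via $1-\supp(z)$ is the same argument in different notation, and the aside about passing to $z^*z$ is unnecessary but harmless.
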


\begin{proof}
    The right annihilator of $\{z\}$ is $rM$ for a projection $r$.  Each $e_i \leq r$, so $e \leq r$ and $ze = 0$.
\end{proof}

\begin{fact}[{\cite[Section 20, Theorem 1]{Ber72}}]\label{fact:addability}
    Let $(e_i)_{i \in I}$ and $(f_i)_{i \in I}$ be orthogonal families of projections in an \AWstar-algebra $M$ with $e_i \sim f_i$ for every $i$.  Then $\bigvee_i e_i \sim \bigvee_i f_i$.
\end{fact}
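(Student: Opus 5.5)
Since this Fact is quoted from Berberian, I would not reprove it in the paper. As a sanity check on what it rests on, I would reconstruct Kaplansky's addability argument: build one partial isometry $w$ out of the individual implementing partial isometries. The step I am least sure of is constructing $w$, described below. For each $i$ choose $w_i \in M$ with $w_i^*w_i = e_i$ and $w_iw_i^* = f_i$. Put $e = \bigvee_i e_i$ and $f = \bigvee_i f_i$. The goal is an element $w$ with
\[
w e_i = w_i \quad (i \in I), \qquad w(1-e) = 0, \qquad (1-f)w = 0 .
\]

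Granting such a $w$, the remaining checks all go through Lemma~\ref{lem:annihilator}.
\begin{itemize}
\item The right annihilator of $(w^*w - e)$ contains every $e_i$, since $w^*we_i = w^*w_i$. Here $w^*w_i = w^*f_iw_i$, and $f_iw = w_i w_i^* w = w_i(we_i)^* w = w_i e_i$ on the range of $e$. With $w(1-e) = 0$ this gives $w^*w e_i = e_i$.
\item Hence $(w^*w - e)e = 0$. Since $w^*w(1-e) = 0$ as well, we get $w^*w = e$.
\item Symmetrically $ww^*f_i = f_i$. Because $ww^* \le f$ follows from $(1-f)w = 0$, we get $ww^* = f$.
\end{itemize}
This gives $e \sim f$. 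Each of these verifications is a routine application of Lemma~\ref{lem:annihilator} to an element killing every $e_i$ (respectively every $f_i$).

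The main obstacle is producing $w$ at all: the family $(w_i)$ has no norm-convergent sum, and normality is not available at this stage. My first attempt would be the $2 \times 2$ matrix trick in the \AWstar-algebra $M_2(M)$. The projections
\[
p_i = \tfrac12\begin{pmatrix} e_i & w_i^* \\ w_i & f_i \end{pmatrix}
\]
are pairwise orthogonal, because the $e_i$ are mutually orthogonal, the $f_i$ are mutually orthogonal, and each $w_i$ maps $e_i$ onto $f_i$. Take $p = \bigvee_i p_i$ in $M_2(M)$ and read $w$ off as twice the $(2,1)$ entry of $p$. The identities $pp_i = p_i$ and $p \le \mathrm{diag}(e,f)$ should then be converted, entrywise and via Lemma~\ref{lem:annihilator}, into $we_i = w_i$.

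The delicate point is controlling the diagonal entries of $p$, i.e.\ showing that the corners of the join are $\tfrac12 e$ and $\tfrac12 f$. If the matrix trick does not close, I would instead follow Kaplansky's route:
\begin{itemize}
\item reduce to the case where all $e_i, f_i$ sit inside a commutative \AWstar-subalgebra, after conjugating by the symmetry exchanging $e_i$ and $f_i$;
\item there, build $w$ using the fact that right annihilators are generated by projections, so an element is pinned down by its values on each $e_i$;
\item argue existence through the \AWstar-completeness of orthogonal families in the diagonal, and then spread this along the partial isometries.
\end{itemize}
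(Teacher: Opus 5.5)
Citing Berberian is exactly what the paper does; it gives no proof of Fact~\ref{fact:addability}. Your reconstruction, however, leaves open the one step that carries the content, and the relations you plan to use do not close it. Write $p=\bigvee_i p_i=\begin{pmatrix} a & b^*\\ b & d\end{pmatrix}$. The $(2,1)$ entry of $pp_i=p_i$ gives only $be_i=(f-d)w_i$, which says nothing until $d$ is known. Obtaining $a=\tfrac12 e$ from a supremum formula as in Proposition~\ref{prop:joinsup} would need normality, which here comes from \cite{AH26} and \cite{SW91} and so presupposes the equivalence theory you are trying to ground.

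The missing idea is the symmetry $s=\operatorname{diag}(1,-1)$. It maps $p_i$ to $p_i'=\operatorname{diag}(e_i,f_i)-p_i$, and $p_i\perp p_j'$ for all $i,j$. Hence $p\perp sps=\bigvee_j p_j'$, and $p+sps=p\vee sps=\bigvee_i\operatorname{diag}(e_i,f_i)$. This last join is fixed by $\operatorname{Ad}s$, so it is block diagonal. It lies between every $\operatorname{diag}(e_i,f_i)$ and $\operatorname{diag}(e,f)$, so it equals $\operatorname{diag}(e,f)$. Since $p+sps=2\operatorname{diag}(a,d)$, you get $a=\tfrac12e$ and $d=\tfrac12f$. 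Then $p^2=p$ gives $b^*b=\tfrac14e$ and $bb^*=\tfrac14f$, so $w=2b$ implements $e\sim f$ with no further checks. Incidentally, your first bullet is circular as written. The step $w_i(we_i)^*w=w_ie_i$ presupposes $e_iw^*w=e_i$, which is what that bullet is meant to prove. The correct argument is that $f_iw-w_i$ annihilates every $e_j$ and $1-e$, hence vanishes by Lemma~\ref{lem:annihilator}.

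Even completed, the argument rests on $M_2(M)$ being an \AWstar-algebra. That is Berberian's theorem~\cite{Ber58}, whose proof relies on the structure theory of \AWstar-algebras, in which additivity of equivalence is already used. So this route derives the Fact from a deeper result rather than exhibiting what it rests on. The extra room of $M_2(M)$ is needed only because $e$ and $f$ may overlap. When $\bigvee_i e_i\perp\bigvee_i f_i$, the same argument runs inside $M$ with $p_i=\tfrac12(e_i+f_i+w_i+w_i^*)$ and $s=1-2f$, and no commutative subalgebra is needed. Your fallback does not supply a substitute for the overlapping case. A single symmetry exchanging every $e_i$ with $f_i$ is essentially the element $w$ you are trying to build. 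Moreover, $e_i$ and $f_i$ need not commute, so no commutative \AWstar-subalgebra contains them all.
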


Proposition~\ref{prop:joinsup} is the case $x = E$ of~\cite[Lemma~3]{Ber83}, which Berberian attributes to Sait\^{o}~\cite{Sai81}: in a normal \AWstar-algebra, if $f_\alpha \uparrow f$ in the projection lattice, then $x f_\alpha x^* \uparrow xfx^*$ in the self-adjoint part for every $x$.  We include the short proof via Lemma~\ref{lem:compression}.

\begin{prop}\label{prop:joinsup}
    Let $M$ be a normal \AWstar-algebra, let $E \in \Proj(M)$, and let $(q_i)_{i \in I}$ be pairwise orthogonal projections with join $q$.  Then
    \[
        E q E = \sup_{EME,\;F \fin I}\ \sum_{i \in F} E q_i E,
    \]
    and this is also the supremum in $M_{\sa}$.  In particular, if $\sum_{i \in F}E q_i E \leq \rho E$ for every finite $F$, then $E q E \leq \rho E$.
\end{prop}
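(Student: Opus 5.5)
Let $\mathcal{F}$ be the set of finite subsets of $I$, directed by inclusion, and put $p_F = \sum_{i \in F} q_i = \bigvee_{i \in F} q_i$ for $F \in \mathcal{F}$. Since the $q_i$ are pairwise orthogonal, $(p_F)_{F \in \mathcal{F}}$ is an upward directed family of projections whose join is $\bigvee_i q_i = q$. Normality of $M$ says exactly that $q = \sup_F p_F$ in $M_{\sa}$. This is the key input; the rest is transferring this supremum through the compression by $E$.

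Next apply Lemma~\ref{lem:compression} with $e = E$ and the increasing net $a_F = p_F$ with supremum $a = q$. It gives that $EqE$ is the supremum of $(E p_F E)_F$ in $EME$. By orthogonality and linearity, $E p_F E = \sum_{i \in F} E q_i E$, which yields the first displayed identity. Lemma~\ref{lem:corner}, applied to the set $S = \{\sum_{i\in F} Eq_iE : F \in \mathcal{F}\} \subseteq (EME)_{\sa}$ with supremum $s = EqE$ in $EME$, upgrades this to the supremum in $M_{\sa}$.

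For the final assertion, suppose $\sum_{i \in F} Eq_iE \le \rho E$ for every finite $F$. Then $\rho E \in (EME)_{\sa}$ is an upper bound of $S$, so the least upper bound satisfies $EqE \le \rho E$.

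The only point needing care is the directedness bookkeeping. The family $(p_F)$ must really be upward directed with join $q$, which holds because $p_F \vee p_G = p_{F\cup G}$ and every $q_i \le p_{\{i\}}$. We must also check that normality is used for suprema in $M_{\sa}$ rather than in some corner. Neither step is a genuine obstacle; the substantive content sits in Lemmas~\ref{lem:compression} and~\ref{lem:corner} and in the normality hypothesis.
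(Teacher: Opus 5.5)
Your proposal is correct and follows the paper's proof essentially step for step. Normality gives $q = \sup_F \sum_{i\in F} q_i$ in $M_{\sa}$. Lemma~\ref{lem:compression} then yields the supremum in $EME$, Lemma~\ref{lem:corner} lifts it to $M_{\sa}$, and leastness gives the final bound.
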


\begin{proof}
    The projections $q_F = \sum_{i \in F}q_i$ increase with join $q$, so normality gives $q = \sup_F q_F$ in $M_{\sa}$.  The first identity now follows from Lemma~\ref{lem:compression}, the second assertion from Lemma~\ref{lem:corner}, and the last from leastness.
\end{proof}

\section{Projection-supply dilation}\label{sec:supply}

We recall the rotation calculation of~\cite{CP84}, keeping track of the support control needed below.

\begin{lem}\label{lem:rotation}
    Let $M$ be an \AWstar-algebra, let $E,p \in \Proj(M)$, and let $x \in (EME)_+$.  Put $y = EpE$ and suppose $\lVert y \rVert < 1$ and $x+y \leq E$.  Set
    \[
    a = (1-E)pE(1-y)^{-1},\qquad c = (1-a)x^{1/2},\qquad z = cc^* .
    \]
    Then $z$ is a positive contraction with $zp = pz = 0$, $EzE = x$ and $\supp(z) \sim \supp(x)$.  If moreover $E \leq P$ and $p \leq P$ for a projection $P$, then $\supp(z) \leq P$.
\end{lem}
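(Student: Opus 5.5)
The plan is a direct computation in $M$, organised around the block structure with respect to $E$ and $1-E$. First I will record the basic algebraic facts. Since $y = EpE \in (EME)_+$ with $\lVert y\rVert<1$, the element $1-y$ is invertible, commutes with $E$, and $(1-y)^{-1}E = E(1-y)^{-1}$ is the inverse of $E-y$ in $EME$. Hence $a = (1-E)\,a\,E$, so $Ea = 0$ and $a^2=0$, and $1-a$ is invertible with inverse $1+a$.

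\emph{Compression and orthogonality.} Because $Ea=0$ and $Ex^{1/2}=x^{1/2}$, we get $Ec = x^{1/2}$, and therefore $EzE = Ec\,c^*E = x$. For $zp=0$ it suffices to show $c^*p = 0$. Write
\[
c^*p = x^{1/2}(1-a^*)p = x^{1/2}\bigl(Ep - (1-y)^{-1}Ep(1-E)p\bigr).
\]
The key identity is $Ep(1-E)p = Ep - EpEp = Ep - yp = (1-y)Ep$, using $yE=y$. With it the bracket vanishes, so $zp=0$, and taking adjoints gives $pz=0$.

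\emph{Contraction.} Since $\lVert z\rVert = \lVert c^*c\rVert$, I will compute
\[
c^*c = x^{1/2}(1-a^*)(1-a)x^{1/2} = x^{1/2}(E + a^*a)x^{1/2}.
\]
Using the same identity, $a^*a = (1-y)^{-1}Ep(1-E)pE(1-y)^{-1} = (1-y)^{-1}(y-y^2)(1-y)^{-1} = y(1-y)^{-1}E$. So in $EME$ we have $E + a^*a = (E-y)^{-1}$, and $c^*c = x^{1/2}(E-y)^{-1}x^{1/2}$. This element is $\le E$ exactly when $\lVert (E-y)^{-1/2}x^{1/2}\rVert\le 1$, which holds iff $x \le E-y$, and that is the hypothesis. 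Hence $z$ is a positive contraction.

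\emph{Supports.} $\supp(z)$ is the left support of $c$. Since $1-a$ is invertible, the right support of $c=(1-a)x^{1/2}$ equals that of $x^{1/2}$, namely $\supp(x)$. In an \AWstar-algebra the left and right projections of an element are equivalent via its polar-type partial isometry, so $\supp(z)\sim\supp(x)$. This is the step that most needs a citation to Berberian~\cite{Ber72}, and I expect it to be the only non-computational point. Finally, if $E,p\le P$, then $(1-P)x^{1/2}=0$ and $(1-P)a = (1-P)pE(1-y)^{-1}=0$. Thus $(1-P)c=0$, so $(1-P)z=0$ and $\supp(z)\le P$.
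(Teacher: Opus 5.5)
Your proof is correct and follows the paper's argument essentially step for step: the identity $Ep(1-E)p=(1-y)Ep$ (the adjoint of the paper's $pa=pE$), the formula $c^*c=x^{1/2}(E-y)^{-1}x^{1/2}$ with the contraction read off after conjugating by $(E-y)^{-1/2}$, and invertibility of $1-a$ combined with the equivalence of left and right supports. Your last step, showing $(1-P)c=0$, is a slightly more direct form of the paper's bound on the left support of $c$ by $E\vee(P-E)=P$.
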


\begin{proof}
    Since $\lVert y \rVert < 1$, the element $E-y$ is invertible in $EME$, and $1-y$ is invertible in $M$ with inverse $(E-y)^{-1}+(1-E)$.  We have $a = aE$ and $Ea = 0$, and $p(1-E)pE = pE-pEpE = pE(1-y)$, so $pa = pE$.  Since $x^{1/2} = Ex^{1/2}$, this gives $pc = 0$, hence $pz = zp = 0$, while $Ea = 0 = a^*E$ gives $EzE = x$.  Next, $a^*a = (1-y)^{-1}Ep(1-E)pE(1-y)^{-1} = y(1-y)^{-1}$, and the cross terms vanish after multiplication by $x^{1/2}$, so $c^*c = x^{1/2}(E-y)^{-1}x^{1/2}$.  With $T = (E-y)^{-1/2}x^{1/2}$ we have $c^*c = T^*T$ and $TT^* = (E-y)^{-1/2}x(E-y)^{-1/2} \leq E$ because $x+y \leq E$, so $\lVert c \rVert \leq 1$.  Since $a = aE$ and $Ea = 0$, we have $a^2 = 0$, so $1-a$ is invertible with inverse $1+a$.  Hence $c = (1-a)x^{1/2}$ and $x^{1/2}$ have the same right annihilator, and therefore the right support of $c$ is exactly $\supp(x)$.  The left support of $c$ is $\supp(z)$, and left and right supports in an \AWstar-algebra are equivalent, so $\supp(z)\sim\supp(x)$.  Finally, if $E,p \leq P$ then $(1-E)pE = (P-E)pE$, so the left support of $a$ is dominated by $P-E$, and the left support of $c$ is dominated by $E \vee(P-E) = P$.
\end{proof}

\begin{proof}[Proof of Theorem~\ref{mainthm:A}]
    Let $(R_i)$ be a projection-supply for $(x_i)$.  Well-order $I$ and identify it with an ordinal.  For $i \in I$ put $P_i = E\vee\bigvee_{j < i}R_j$.  We construct the $q_i$ by transfinite recursion, maintaining that they are pairwise orthogonal, that $Eq_jE = x_j$ and $q_j\sim\supp(x_j)$, and that $q_j \leq E \vee \bigvee_{k \leq j}R_k$.
    
    Suppose $q_j$ has been constructed for $j < i$, and put $p^{(i)} = \bigvee_{j < i} q_j$, so that $p^{(i)} \leq P_i$.  Proposition~\ref{prop:joinsup}, applied to $(q_j)_{j < i}$, gives $E p^{(i)} E \leq  \tfrac{1}{2}E$, and $x_i \leq  \tfrac{1}{2}E$.  Hence $\lVert Ep^{(i)}E \rVert < 1$ and $x_i + Ep^{(i)}E \leq E$.  Lemma~\ref{lem:rotation}, with $p = p^{(i)}$ and $P = P_i$, gives a positive contraction $z_i$ with $z_i p^{(i)} = p^{(i)}z_i = 0$ and $E z_i E = x_i$, whose support $s_i$ satisfies $s_i \leq P_i$ and $s_i \sim \supp(x_i)\precsim R_i$.  Choose $f_i \leq R_i$ with $f_i \sim s_i$, and a partial isometry $w_i$ with $w_i^*w_i = s_i$ and $w_i w_i^* = f_i$.  Since $R_i$ is orthogonal to $E$ and to every $R_j$ with $j < i$, it is orthogonal to $P_i$ by Lemma~\ref{lem:annihilator}, so $f_i$ is orthogonal to $E$, $s_i$ and $p^{(i)}$.  Put $t_i = (z_i - z_i^2)^{1/2}$ and
    \[
        q_i = z_i + t_i w_i^* + w_i t_i + w_i(s_i - z_i)w_i^* .
    \]
    Under the identification of $(s_i + f_i)M(s_i + f_i)$ with a $2\times2$ matrix corner induced by $w_i$, this is the Halmos projection
    \[
        \begin{pmatrix}z_i&t_i\\t_i&s_i - z_i\end{pmatrix}
    \]
    of~\cite{Hal50} (see also~\cite{BS10}).  The element $V_i = z_i^{1/2} + w_i(s_i-z_i)^{1/2}$ satisfies $V_i^*V_i = s_i$ and $V_i V_i^* = q_i$, so $q_i \sim s_i \sim \supp(x_i)$.  Since $z_i p^{(i)} = 0$, we have $s_i\perp p^{(i)}$; together with $f_i\perp p^{(i)}$ and $q_i \leq s_i + f_i$ this gives $q_i\perp q_j$ for $j < i$, and $q_i \leq s_i + f_i \leq E \vee \bigvee_{k \leq i}R_k$.  Finally $Ew_i = 0$, so $E q_i E = E z_i E = x_i$.
    
    With $q = \bigvee_i q_i$, Proposition~\ref{prop:joinsup} identifies $EqE$ as the supremum of the finite subsums of $(x_i)$ in $M_{\sa}$.
\end{proof}

\begin{defn}\label{defn:copysupply}
    Let $\kappa$ be an infinite cardinal.  A nonzero projection $E \in \Proj(M)$ has a \textbf{$\kappa$-copy supply} if there are pairwise orthogonal projections $(R_\xi)_{\xi < \kappa}$, each orthogonal to $E$ and equivalent to $E$.
\end{defn}

A $\kappa$-copy supply for $E$ is a projection-supply for every family of at most $\kappa$ positive elements of $EME$.  Rescaling in Theorem~\ref{mainthm:A} therefore gives the following.

\begin{cor}\label{cor:copysupply}
    Let $M$ be a normal \AWstar-algebra and let $E \in \Proj(M)$ have a $\kappa$-copy supply.  Then every family of at most $\kappa$ positive elements of $EME$ with uniformly norm-bounded finite subsums has a supremum of its finite subsums, and this supremum lies in $EME$.
\end{cor}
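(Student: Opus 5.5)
The plan is to reduce to Theorem~\ref{mainthm:A} by rescaling, after checking that the $\kappa$-copy supply is a projection-supply for the family.

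First, the supply condition. Let $(x_i)_{i\in I}$ be a family with $|I|\le\kappa$, $x_i\in(EME)_+$, and $\sum_{i\in F}x_i\le C$ in norm for all finite $F$. Fix an injection $I\to\kappa$, $i\mapsto\xi(i)$, and put $R_i=R_{\xi(i)}$. These projections are pairwise orthogonal and orthogonal to $E$. Since $x_i\in EME$, we have $\supp(x_i)\le E\sim R_i$, so $\supp(x_i)\precsim R_i$. Thus $(R_i)$ is a projection-supply for $(x_i)$, and also for any positive rescaling of it, since supports are unchanged by positive scalars.

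Next, the rescaling. If $C=0$, every $x_i=0$ and the supremum is $0\in EME$, so assume $C>0$ and set $x_i'=x_i/(2C)$. Each finite subsum $\sum_{i\in F}x_i$ is a positive element of $EME$ of norm at most $C$, hence lies below $CE$. Therefore $\sum_{i\in F}x_i'\le\tfrac12E$ for every finite $F$. Since $M$ is normal, Theorem~\ref{mainthm:A} applies to $(x_i')$. It yields orthogonal projections $q_i$ with join $q$ such that $EqE$ is the supremum in $M_{\sa}$ of the finite subsums of $(x_i')$.

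Finally, scale back. Multiplication by $2C>0$ is an order automorphism of $M_{\sa}$, so $2C\,EqE$ is the supremum of the finite subsums of $(x_i)$, and it visibly lies in $EME$.

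Almost all of the work is contained in Theorem~\ref{mainthm:A}. The only point needing care is the first one: that $\supp(x_i)\le E$, so that $E\sim R_i$ gives the subequivalence required by Definition~\ref{defn:supply}. This holds because $x_i=Ex_iE$ satisfies $x_iE=x_i$.
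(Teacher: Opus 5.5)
Your proposal is correct and follows the paper's argument: the paper likewise notes that a $\kappa$-copy supply is a projection-supply for any family of at most $\kappa$ positive elements of $EME$, since $\supp(x_i)\le E\sim R_\xi$. It then rescales so that the finite subsums lie below $\tfrac12 E$ and applies Theorem~\ref{mainthm:A}.
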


\section{Finite \AWstar-algebras}\label{sec:finite}

Let $M$ be a finite \AWstar-algebra with center $Z(M)$, and let $\Delta_M:\Proj(M)\to Z(M)_+$ be its center-valued dimension function, normalized by $\Delta_M(1) = 1$.  We use the following facts from the dimension theory of finite \AWstar-algebras~\cite[Chapter 6]{Ber72}: $p\sim q$ if and only if $\Delta_M(p) = \Delta_M(q)$; $p\precsim q$ if and only if $\Delta_M(p) \leq \Delta_M(q)$; $\Delta_M$ is additive on orthogonal projections; and for every orthogonal family $(p_i)_{i \in I}$,
\begin{equation}\label{eq:completeadditivity}
\Delta_M\Bigl(\bigvee_i p_i\Bigr) = \sup_{F \fin I}\ \sum_{i \in  F}\Delta_M(p_i)\quad\text{in }Z(M)_{\sa} .
\end{equation}
The center $Z(M)$ is a commutative \AWstar-algebra, hence monotone complete.  We also use that matrix algebras over an \AWstar-algebra are \AWstar-algebras~\cite{Ber58}, and that finiteness and the center-valued dimension pass to finite matrix amplifications.

\begin{lem}\label{lem:subadditive}
    Let $(p_i)_{i \in I}$ be projections in $M$, and suppose the sums $\sum_{i \in F}\Delta_M(p_i)$ over finite $F\subseteq I$ are bounded.  Then
    \[
        \Delta_M\Bigl(\bigvee_i p_i\Bigr) \leq \sup_{F \fin I}\ \sum_{i \in F}\Delta_M(p_i).
    \]
\end{lem}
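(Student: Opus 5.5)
The plan is to replace the family $(p_i)$ by an orthogonal family with the same join and dimensions dominated termwise, and then apply complete additivity~\eqref{eq:completeadditivity}. Well-order $I$, identify it with an ordinal, and for $i \in I$ put
\[
e_i = \bigvee_{j \leq i} p_j - \bigvee_{j < i} p_j ,
\]
with the convention that the join over the empty set is $0$. These are projections because $\bigvee_{j<i}p_j \leq \bigvee_{j\leq i}p_j$. They are pairwise orthogonal: for $k < i$ we have $e_k \leq \bigvee_{j \leq k}p_j \leq \bigvee_{j<i}p_j$, which is orthogonal to $e_i$.

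The first key step is $\bigvee_i e_i = \bigvee_i p_i$. I will show by transfinite induction that $\bigvee_{j\le i} e_j = \bigvee_{j \le i} p_j$. At each $i$, the inductive hypothesis gives $\bigvee_{j<i}e_j = \bigvee_{j<i}p_j$, because joins of joins reassociate in the projection lattice; this covers successor and limit $i$ alike. Adding $e_i$ orthogonally then yields $\bigvee_{j\le i}p_j$. Taking the join over all $i$ gives the claim.

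The second key step is $\Delta_M(e_i) \leq \Delta_M(p_i)$. Write $r = \bigvee_{j<i}p_j$. Then $e_i = (r\vee p_i) - r$. Kaplansky's parallelogram law in an \AWstar-algebra gives $(r \vee p_i) - r \sim p_i - (r \wedge p_i)$, so $e_i \precsim p_i$, and monotonicity of $\Delta_M$ under $\precsim$ yields the inequality. If I want to avoid citing the parallelogram law, I will use the support argument from the proof of Lemma~\ref{lem:rotation}: $e_i$ is the left support of $(1-r)p_i$, whose right support is at most $p_i$, and left and right supports are equivalent.

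Finally, \eqref{eq:completeadditivity} gives
\[
\Delta_M\Bigl(\bigvee_i p_i\Bigr) = \sup_{F} \sum_{i\in F}\Delta_M(e_i) \leq \sup_F \sum_{i \in F}\Delta_M(p_i),
\]
where the right-hand supremum exists in $Z(M)_{\sa}$ because $Z(M)$ is monotone complete and the sums are bounded and increasing. The step I expect to need the most care is the limit-stage bookkeeping in the induction for $\bigvee_i e_i = \bigvee_i p_i$. It is purely lattice-theoretic, but it relies on $\bigvee_{j<i}p_j$ being computed consistently; this holds because the projection lattice of an \AWstar-algebra is complete.
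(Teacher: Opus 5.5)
Your proposal is correct and is essentially the paper's proof: the same well-ordered differences $\bigvee_{j\le i}p_j-\bigvee_{j<i}p_j$, the same transfinite induction for the join, Kaplansky's parallelogram law for $e_i\precsim p_i$, and then complete additivity of $\Delta_M$ on the orthogonal family. Your fallback via the left support of $(1-r)p_i$ is also valid; it is the standard proof of the parallelogram law.
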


\begin{proof}
    Well-order $I$ and put $d_i = \bigvee_{j \leq i}p_j-\bigvee_{j < i}p_j$.  The $d_i$ are pairwise orthogonal, and induction on $i$ shows $\bigvee_{j \leq i}d_j = \bigvee_{j \leq i}p_j$, so their join is $\bigvee_i p_i$.  By the parallelogram law~\cite[Theorem 5.4]{Kap51}, $(P\vee p)-P\sim p-(P\wedge p)$ for projections $P$ and $p$, so $d_i \precsim p_i$ and $\Delta_M(d_i) \leq \Delta_M(p_i)$.  Now apply~\eqref{eq:completeadditivity} to $(d_i)$.
\end{proof}

The following packing lemma is a standard application of the dimension theory; we include the short proof.

\begin{lem}\label{lem:packing}
    Let $R \in \Proj(M)$ and let $(p_i)_{i \in I}$ be projections with $\sum_{i \in F}\Delta_M(p_i) \leq \Delta_M(R)$ for every finite $F\subseteq I$.  Then there are pairwise orthogonal projections $(R_i)_{i \in I}$ with $R_i \leq R$ and $R_i \sim p_i$.
\end{lem}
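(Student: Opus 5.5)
We argue by transfinite recursion, choosing each $R_i$ inside the part of $R$ not yet used and controlling that part through the center-valued dimension. Well-order $I$ and identify it with an ordinal. Suppose pairwise orthogonal $R_j \leq R$ with $R_j \sim p_j$ have been chosen for all $j < i$. Put $Q_i = \bigvee_{j<i}R_j$. Since the $R_j$ are orthogonal, $Q_i \leq R$, and by \eqref{eq:completeadditivity}
\[
\Delta_M(Q_i) = \sup_{F \fin i}\ \sum_{j\in F}\Delta_M(p_j).
\]
The free space is $R - Q_i$. By additivity, $\Delta_M(R-Q_i) = \Delta_M(R)-\Delta_M(Q_i)$. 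It suffices to show
\[
\Delta_M(p_i) \leq \Delta_M(R)-\Delta_M(Q_i),
\]
because then $p_i \precsim R-Q_i$, and we may choose $R_i \leq R - Q_i$ with $R_i \sim p_i$. This $R_i$ is orthogonal to every $R_j$ with $j<i$, since $R_j \leq Q_i$.

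The key inequality is $\Delta_M(Q_i)+\Delta_M(p_i) \leq \Delta_M(R)$. For each finite $F \subseteq i$, the set $F \cup \{i\}$ is a finite subset of $I$, so the hypothesis gives
\[
\sum_{j\in F}\Delta_M(p_j) \leq \Delta_M(R)-\Delta_M(p_i).
\]
Hence $\Delta_M(R)-\Delta_M(p_i)$ is an upper bound in $Z(M)_{\sa}$ for the finite partial sums. Taking the supremum over $F$ (it exists since $Z(M)$ is monotone complete) gives $\Delta_M(Q_i)\leq \Delta_M(R)-\Delta_M(p_i)$. Alternatively, \eqref{eq:translate} gives $\sup_F(\sum_F + \Delta_M(p_i)) = \Delta_M(Q_i)+\Delta_M(p_i)$, with each term bounded by $\Delta_M(R)$.

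The step to watch is that the recursion never uses more than the hypothesis allows, at limit stages in particular. Here the join $Q_i$ may involve infinitely many terms, so its dimension is only a supremum in the center. Complete additivity \eqref{eq:completeadditivity} is what lets us pass through limits without any loss. The comparison fact "$\Delta_M(p)\leq\Delta_M(q)$ implies $p\precsim q$" then turns the dimension bound into an actual subprojection of the free space. Both facts hold over arbitrary centers, so no central decomposition or case split is needed.
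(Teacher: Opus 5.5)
Your proof is correct and is essentially the paper's argument: the same transfinite recursion, with \eqref{eq:completeadditivity} identifying $\Delta_M\bigl(\bigvee_{j<i}R_j\bigr)$ as the supremum of the partial sums of the $\Delta_M(p_j)$, the hypothesis applied to $F\cup\{i\}$ to get $\Delta_M(Q_i)+\Delta_M(p_i)\leq\Delta_M(R)$, and comparison to place $R_i$ under $R-Q_i$. One small slip: $Q_i\leq R$ holds because each $R_j\leq R$, not because the $R_j$ are orthogonal; orthogonality is what makes \eqref{eq:completeadditivity} applicable.
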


\begin{proof}
    Well-order $I$ and suppose $R_j$ has been chosen for $j < i$.  Put $R_{< i} = \bigvee_{j < i}R_j$.  By~\eqref{eq:completeadditivity}, $\Delta_M(R_{<i}) = \sup_{F \fin i}\sum_{j \in F}\Delta_M(p_j)$, and since $\sum_{j \in F}\Delta_M(p_j)+\Delta_M(p_i) \leq \Delta_M(R)$ for every finite $F\subseteq i$, we get $\Delta_M(R_{<i})+\Delta_M(p_i) \leq \Delta_M(R)$.  Hence $\Delta_M(p_i) \leq \Delta_M(R-R_{<i})$, so $p_i \precsim R-R_{<i}$, and we choose $R_i \leq R-R_{<i}$ with $R_i \sim p_i$.
\end{proof}

\begin{lem}\label{lem:finitesupply}
    Let $(x_i)_{i \in I}\subseteq M_+$ and put $E = \bigvee_i \supp(x_i)$.  If
    \[
        \sum_{i \in F}\Delta_M(\supp x_i) \leq \Delta_M(1-E)
        \qquad(F  \fin I),
    \]
    then each $x_i$ lies in $EME$, and $(x_i)$ has a projection-supply consisting of projections $R_i \leq 1-E$ with $R_i \sim \supp(x_i)$.  In particular, the hypothesis holds whenever
    \[
        \sum_{i \in F}\Delta_M(\supp x_i) \leq  \tfrac{1}{2}\,1
        \qquad(F  \fin I).
    \]
\end{lem}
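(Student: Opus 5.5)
First, each $x_i$ lies in $EME$. Since $\supp(x_i) \leq E$, we have $x_i E = x_i$ (support is the least projection $p$ with $x_ip=x_i$, and $E$ dominates it). Taking adjoints gives $E x_i = x_i$, so $x_i = E x_i E \in EME$.

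Second, the projection-supply comes from Lemma~\ref{lem:packing}. Take $R = 1-E$ and $p_i = \supp(x_i)$. The hypothesis $\sum_{i\in F}\Delta_M(\supp x_i)\leq \Delta_M(1-E)$ is exactly the hypothesis of that lemma. It produces pairwise orthogonal $R_i \leq 1-E$ with $R_i\sim \supp(x_i)$. These $R_i$ are orthogonal to $E$, and $\supp(x_i)\precsim R_i$ holds trivially. This is precisely a projection-supply in the sense of Definition~\ref{defn:supply}.

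Third, the sufficient condition. Assume $\sum_{i\in F}\Delta_M(\supp x_i)\leq \tfrac12\,1$ for every finite $F$. Since $Z(M)$ is monotone complete, put $D = \sup_F \sum_{i\in F}\Delta_M(\supp x_i)$, so that $D\leq \tfrac12\,1$. Lemma~\ref{lem:subadditive} applied to $p_i=\supp(x_i)$ gives $\Delta_M(E)\leq D\leq\tfrac12\,1$. Additivity of $\Delta_M$ then gives
\[
\Delta_M(1-E) = 1-\Delta_M(E) \geq \tfrac12\,1 \geq D \geq \sum_{i\in F}\Delta_M(\supp x_i)
\]
for every finite $F$, which is the first hypothesis.

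There is no real obstacle here. The only point needing care is using the supremum $D$ in $Z(M)$ in the third step, which is justified by monotone completeness of the center.
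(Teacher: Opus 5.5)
Your proposal is correct and follows the paper's proof: Lemma~\ref{lem:packing} with $R=1-E$ produces the supply, and Lemma~\ref{lem:subadditive} together with additivity of $\Delta_M$ gives the sufficient condition. Your explicit check that $x_iE=x_i$, so that $x_i\in EME$, spells out a step the paper leaves implicit.
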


\begin{proof}
    The first assertion is Lemma~\ref{lem:packing} with $R = 1-E$.  For the last assertion, Lemma~\ref{lem:subadditive} gives $\Delta_M(E) \leq \tfrac{1}{2}\,1$, hence $\Delta_M(1-E) \geq \tfrac{1}{2}\,1$.
\end{proof}

\begin{proof}[Proof of Theorem~\ref{mainthm:C}]
    Put $p_i = \supp(x_i)$ and $C = \sup_F\sum_{i \in F}\Delta_M(p_i) \in Z(M)_+$, and rescale so that the finite subsums of $(x_i)$ have norm at most $ \tfrac{1}{2}$.  Choose an integer $N>\lVert C\rVert$ and put $B = M_{N+1}(M)$ and $E = e_{00}$, so that $EBE\cong M$.  Identify $Z(B)$ with $Z(M)$.  The $N+1$ diagonal matrix units are equivalent with sum $1$, so $\Delta_B(E) = \frac{1}{N+1}$ and $\Delta_B(1-E) = \frac{N}{N+1}$, and the normalized dimension on the corner $EBE$ is $(N+1)\Delta_B$.  Hence $\Delta_B(p_i) = \frac{1}{N+1}\Delta_M(p_i)$, and for every finite $F$
    \[
        \sum_{i \in F}\Delta_B(p_i) \leq \frac{\lVert C\rVert}{N+1} \leq \frac{N}{N+1} = \Delta_B(1-E).
    \]
    Lemma~\ref{lem:packing} gives pairwise orthogonal projections $R_i \leq 1-E$ with $R_i \sim p_i$, which form a projection-supply for $(x_i)$ in $B$.  The algebra $B$ is finite, hence normal by~\cite{Wri80}, and Theorem~\ref{mainthm:A} in $B$ gives the supremum of the finite subsums, lying in $EBE$.  Under $EBE\cong M$ this is the required supremum in $M_{\sa}$.
\end{proof}

\begin{rem}\label{rem:haagerup}
    When $M$ is a finite factor, Theorem~\ref{mainthm:C} also follows from Haagerup's completeness theorem for the quasitracial metric~\cite[Proposition 3.10]{Haa14}.  The proof above works directly over arbitrary centers and, unlike the metric argument, produces the projection supplies used in Section~\ref{sec:homomorphisms}.  If $M$ admits a center-valued trace, then $M$ is monotone complete by~\cite[Lemma~4]{Ber83}.
\end{rem}

\section{Cardinal monotone completeness and state selection}\label{sec:cardinal}

This section proves results on cardinal completeness and states.  Proposition~\ref{prop:SWcomplete} and Corollary~\ref{cor:SWhomogeneous} are due to Sait\^{o} and Wright~\cite{SW91}.  We include them here for completeness and because the telescoping identity of Lemma~\ref{lem:telescoping} is applied again in the codomain algebra in Section~\ref{sec:homomorphisms}.

\begin{defn}\label{defn:kappacomplete}
    Let $\kappa$ be an infinite cardinal and let $A$ be a unital \cstar-algebra.  Following~\cite[Section 2]{SW91}, $A$ is \textbf{$\kappa$-complete} if the finite subsums of every family of positive elements of $A$, indexed by a set of cardinality at most $\kappa$ and with finite subsums bounded above, have a supremum.  The algebra $A$ is \textbf{$\kappa$-monotone complete} if every norm-bounded increasing net in $A_{\sa}$ indexed by a directed set of cardinality at most $\kappa$ has a supremum.
\end{defn}

\begin{lem}\label{lem:telescoping}
    Let $\mu$ be a limit ordinal and let $(a_\xi)_{\xi < \mu}$ be an increasing family in $A_{\sa}$ such that $a_\lambda = \sup_{\eta < \lambda}a_\eta$ for every limit ordinal $\lambda < \mu$.  Put $d_\xi = a_{\xi+1}-a_\xi$.  Then
    \[
        a_\xi-a_0 = \sup\Bigl\{\sum_{\eta \in F}d_\eta:F \fin \xi\Bigr\}\qquad(\xi < \mu).
    \]
    If the finite subsums of $(d_\xi)_{\xi < \mu}$ have a supremum $s$, then $a_0 + s$ is the supremum of $(a_\xi)_{\xi < \mu}$.
\end{lem}

\begin{proof}
    We argue by transfinite induction on $\xi$; the case $\xi = 0$ is trivial.  At a successor $\xi+1$, the finite subsets of $\xi+1$ containing $\xi$ are cofinal, so~\eqref{eq:translate} and the induction hypothesis give $(a_\xi-a_0)+d_\xi = a_{\xi+1}-a_0$.  At a limit $\lambda$, the finite subsums indexed by $F \fin \lambda$ form the union over $\eta < \lambda$ of those indexed by $F \fin \eta$, so by the induction hypothesis their supremum exists and equals $\sup_{\eta < \lambda}(a_\eta-a_0) = a_\lambda-a_0$.
    
    For the last assertion, $a_0+s$ dominates every $a_\xi$ by the first part.  If $b$ dominates every $a_\xi$ and $F  \fin \mu$, then $F\subseteq\xi$ for some $\xi < \mu$, so $\sum_{\eta \in F}d_\eta \leq a_\xi-a_0 \leq b-a_0$; hence $s\le b-a_0$.
\end{proof}

\begin{lem}\label{lem:directed}
    Suppose every nonempty norm-bounded chain in $A_{\sa}$ of cardinality at most $\kappa$ has a supremum.  Then every nonempty norm-bounded directed subset of $A_{\sa}$ of cardinality at most $\kappa$ has a supremum.
\end{lem}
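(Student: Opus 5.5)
We prove Lemma~\ref{lem:directed} by transfinite induction on the cardinality of the directed set, following the classical Iwamura argument.  Let $D \subseteq A_{\sa}$ be nonempty, norm-bounded and directed, with $|D| \leq \kappa$.  If $D$ is finite, it has a greatest element, and that element is the supremum.  So assume $\lambda = |D|$ is infinite and that the conclusion holds for all directed sets of smaller cardinality.  By Iwamura's lemma, $D$ is the union of a chain $(D_\alpha)_{\alpha < \lambda}$ of directed subsets with the following properties:
\begin{itemize}
\item[(i)] each $D_\alpha$ satisfies $|D_\alpha| < \lambda$ (indeed each is finite when $\lambda = \aleph_0$);
\item[(ii)] $D_\alpha \subseteq D_\beta$ for $\alpha < \beta$;
\item[(iii)] $D_\mu = \bigcup_{\alpha < \mu} D_\alpha$ at limit ordinals $\mu$.
\end{itemize}
To build this chain, enumerate $D = \{d_\xi\}_{\xi<\lambda}$.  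Fix a choice function assigning to each pair in $D$ an upper bound in $D$.  Let $D_\alpha$ be the closure of $\{d_\xi : \xi < \alpha\}$ under this function, taking unions at limit stages.  Each closure is directed, and its size stays below $\lambda$ because closing under a binary operation preserves infinite cardinalities and keeps finite sets countable.  In the case $\lambda = \aleph_0$ we instead use the directedness of $D$ directly: we choose an increasing sequence in $D$ that eventually dominates each $d_n$, which gives a cofinal chain and settles that case directly.

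By the induction hypothesis, each $D_\alpha$ (being directed and bounded) has a supremum $s_\alpha$.  By (ii), the family $(s_\alpha)$ is increasing, and it is norm-bounded, since it lies between an element of $D$ and any upper bound of $D$.  Its range $\{s_\alpha\}$ is a chain of cardinality at most $\lambda \leq \kappa$, so by hypothesis it has a supremum $s$.

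We then show that $s = \sup D$.  For the upper-bound property, each $d \in D$ lies in some $D_\alpha$, so $d \leq s_\alpha \leq s$.  For leastness, let $b$ be an upper bound of $D$.  Then $b$ dominates every $D_\alpha$, so $s_\alpha \leq b$ for all $\alpha$, and hence $s \leq b$.  Condition (iii) is not needed for this step; it only ensures the chain decomposition is well formed.

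The main obstacle is the cardinality bookkeeping in the decomposition.  The directed subsets must have strictly smaller cardinality so that the induction applies, and the countable case must be handled separately through a cofinal sequence, since finite closures need not be finite.  Once the decomposition is in place, the order-theoretic part reduces to the routine upper-bound and leastness checks above.
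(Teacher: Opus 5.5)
Your proof is correct and follows the paper's argument. The paper also inducts on $|D|$, writes $D$ as an increasing union of directed subsets of smaller cardinality, and takes the supremum of the chain of partial suprema; the only difference is that the paper cites this decomposition from Markowsky, whereas you build it by closing initial segments under a choice of upper bounds and treat the countable case with a cofinal sequence. One small indexing slip: the closure of $\{d_\xi:\xi<0\}$ is empty, so index by $\xi\le\alpha$ to keep every $D_\alpha$ nonempty, as the induction hypothesis and the norm bound on $(s_\alpha)$ require.
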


\begin{proof}
    We argue by induction on the cardinality of the directed set $D$; a finite nonempty directed set has a largest element.  If $D$ is infinite, then by~\cite[Theorem 1]{Mar76} it is the union of an increasing family $(D_\alpha)_{\alpha < |D|}$ of nonempty directed subsets of smaller cardinality.  By induction $s_\alpha = \sup D_\alpha$ exists, the $s_\alpha$ form a norm-bounded chain of cardinality at most $\kappa$, and $\sup_\alpha s_\alpha$ is the supremum of $D$.
\end{proof}

\begin{prop}[{\cite[Section 3]{SW91}}]\label{prop:SWcomplete}
    A unital \cstar-algebra is $\kappa$-monotone complete if and only if it is $\kappa$-complete.
\end{prop}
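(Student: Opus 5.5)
We prove the two implications separately; both rest on Lemma~\ref{lem:telescoping} and Lemma~\ref{lem:directed}.

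\emph{$\kappa$-monotone complete $\Rightarrow$ $\kappa$-complete.} Let $(x_i)_{i\in I}$ be positive with $|I|\le\kappa$ and finite subsums bounded above by some $b$. The finite subsums $s_F=\sum_{i\in F}x_i$, indexed by the finite subsets $F\fin I$, form an increasing net. It is norm-bounded, since $0\le s_F\le b$. If $I$ is infinite, the directed set of finite subsets has cardinality $|I|\le\kappa$. If $I$ is finite, the net has a largest element. In either case $\kappa$-monotone completeness gives the supremum.

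\emph{$\kappa$-complete $\Rightarrow$ $\kappa$-monotone complete.} By Lemma~\ref{lem:directed}, it suffices to show that every nonempty norm-bounded chain $C\subseteq A_{\sa}$ with $|C|\le\kappa$ has a supremum. We induct on $|C|$.
\begin{itemize}
\item A finite chain has a largest element.
\item If $C$ has a cofinal subset that is well-ordered by the reverse of the order, then $C$ has a largest element.
\item Otherwise, choose a strictly increasing cofinal sequence $(c_\xi)_{\xi<\mu}$ indexed by a regular limit ordinal $\mu\le|C|$, namely the cofinality of $C$. The supremum of this sequence is the supremum of $C$.
\end{itemize}

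We cannot use $(c_\xi)$ directly, because Lemma~\ref{lem:telescoping} requires continuity at limit stages. So we build a new family $(a_\xi)_{\xi<\mu}$ by transfinite recursion:
\begin{itemize}
\item $a_0=c_0$;
\item $a_{\xi+1}=c_{\xi+1}$;
\item at a limit $\lambda<\mu$, $a_\lambda=\sup_{\eta<\lambda}c_\eta$.
\end{itemize}
The supremum at a limit $\lambda$ exists by the induction hypothesis, because $\{c_\eta:\eta<\lambda\}$ is a chain of cardinality $|\lambda|<\mu\le|C|$. We then check three things:
\begin{itemize}
\item $(a_\xi)$ is increasing, since $c_\eta\le a_\lambda\le c_{\lambda+1}$ for $\eta<\lambda$;
\item it is continuous at limits, i.e.\ $a_\lambda=\sup_{\eta<\lambda}a_\eta$, because the $a$'s and $c$'s are interleaved below $\lambda$;
\item it is cofinal with $C$.
\end{itemize}

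Now set $d_\xi=a_{\xi+1}-a_\xi\ge0$. Lemma~\ref{lem:telescoping} gives $\sum_{\eta\in F}d_\eta\le a_\xi-a_0$ for $F\subseteq\xi$. Hence the finite subsums of $(d_\xi)$ are bounded above by $\beta-a_0$, where $\beta=\sup_{c\in C}\|c\|$ (so every $a_\xi\le\beta\cdot1$). The family $(d_\xi)$ has at most $\kappa$ members, so $\kappa$-completeness provides a supremum $s$ of its finite subsums. The last assertion of Lemma~\ref{lem:telescoping} then shows that $a_0+s$ is the supremum of $(a_\xi)$, and therefore of $C$.

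The main obstacle is organising the induction correctly. Lemma~\ref{lem:directed} has the chain hypothesis only at cardinality $\le\kappa$ as a black box. So the induction must run on chains, and intermediate directed sets must be handled by re-invoking Lemma~\ref{lem:directed} at smaller cardinalities. Concretely, we prove by induction on $\lambda\le\kappa$ the statement ``every norm-bounded chain of cardinality $\le\lambda$ has a supremum''; Lemma~\ref{lem:directed} at cardinality $\lambda$ is then applied last. The limit-stage suprema only ever require chains of strictly smaller cardinality, so the induction closes.
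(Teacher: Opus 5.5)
Your proof is correct and follows essentially the same route as the paper's. You reduce to chains via Lemma~\ref{lem:directed}, pass to a cofinal well-ordered family of regular type, replace limit stages by suprema of earlier terms so that Lemma~\ref{lem:telescoping} applies, and apply $\kappa$-completeness to the increments. The differences are cosmetic: you run a strong induction on $|C|$ where the paper takes the least regular cardinal $\mu$ admitting a counterexample, and you bound the finite subsums of the increments through Lemma~\ref{lem:telescoping} rather than by the direct estimate $\sum_{\xi\in F}d_\xi\le \tilde a_{\xi_n+1}-\tilde a_{\xi_1}$.
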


\begin{proof}
    If $A$ is $\kappa$-monotone complete, the finite subsums of a family as in Definition~\ref{defn:kappacomplete} form a bounded directed set of cardinality at most $\kappa$, so $A$ is $\kappa$-complete.

    Conversely, let $A$ be $\kappa$-complete.  A norm-bounded chain either has a largest element or has a cofinal well-ordered subchain whose order type is an infinite regular cardinal~\cite{Mar76}, and a cofinal subchain has the same upper bounds.  Suppose some chain of cardinality at most $\kappa$ has no supremum, and let $\mu \leq \kappa$ be the least infinite regular cardinal for which some norm-bounded increasing family $(a_\xi)_{\xi < \mu}$ has no supremum.  For a limit $\lambda < \mu$, the family $(a_\eta)_{\eta < \lambda}$ has a cofinal subchain of regular order type $\operatorname{cf}(\lambda) < \mu$, hence a supremum $\tilde a_\lambda$; put $\tilde a_\xi = a_\xi$ at $0$ and at successors.  Then $\tilde a_\xi \leq a_\xi \leq \tilde a_{\xi+1}$, so $(\tilde a_\xi)$ has the same upper bounds as $(a_\xi)$, and it satisfies the continuity hypothesis of Lemma~\ref{lem:telescoping}.  For $F = \{\xi_1 < \dots < \xi_n\}$ we have $\sum_{\xi \in F}(\tilde a_{\xi+1}-\tilde a_\xi) \leq \tilde a_{\xi_n+1}-\tilde a_{\xi_1}$, so $\kappa$-completeness gives a supremum of the finite subsums of the increments, and Lemma~\ref{lem:telescoping} gives a supremum of the chain.  This contradiction shows that all norm-bounded chains of cardinality at most $\kappa$ have suprema, and Lemma~\ref{lem:directed} finishes the proof, since the supremum of the range of a net is the supremum of the net.
\end{proof}

\begin{defn}\label{defn:homogeneous}
    An \AWstar-algebra $M$ is \textbf{$\kappa$-homogeneous} if there are pairwise orthogonal projections $(e_\alpha)_{\alpha < \kappa}$ with $\bigvee_\alpha e_\alpha = 1$ and $e_\alpha \sim 1$ for every $\alpha$.
\end{defn}

Choosing partial isometries $v_\alpha$ with $v_\alpha^*v_\alpha = e_0$ and $v_\alpha v_\alpha^* = e_\alpha$, the elements $e_{\alpha\beta} = v_\alpha v_\beta^*$ form a $\kappa$-indexed system of matrix units with $\sum_\alpha e_{\alpha\alpha} = 1$ and $e_{00} \sim 1$; conversely such a system gives $\kappa$-homogeneity.  A $\kappa$-homogeneous \AWstar-algebra is properly infinite, and every properly infinite \AWstar-algebra is $\aleph_0$-homogeneous~\cite[Section 17, Theorem 1]{Ber72}.

\begin{cor}[{\cite[Theorem 2.5]{SW91}}]\label{cor:SWhomogeneous}
    Every $\kappa$-homogeneous \AWstar-algebra is $\kappa$-monotone complete.
\end{cor}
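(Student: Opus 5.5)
By Proposition~\ref{prop:SWcomplete}, $\kappa$-monotone completeness is equivalent to $\kappa$-completeness. So it suffices to show that the finite subsums of every family $(y_\alpha)_{\alpha\in I}$ of positive elements of $M$ have a supremum, where $|I|\le\kappa$ and the finite subsums are bounded above (hence uniformly norm-bounded). The plan is to move the family into a corner that has a $\kappa$-copy supply and then invoke Corollary~\ref{cor:copysupply}, using normality of $M$ from~\cite[Theorem A]{AH26}.

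The first step is to fix the homogeneity data. Take pairwise orthogonal projections $(e_\alpha)_{\alpha<\kappa}$ with $\bigvee_\alpha e_\alpha=1$ and $e_\alpha\sim 1$, and choose a partial isometry $v$ with $v^*v=1$ and $vv^*=e_0$. The map $\phi(x)=vxv^*$ is then a $*$-isomorphism of $M$ onto $e_0Me_0$. It is an order isomorphism between $M_{\sa}$ and $(e_0Me_0)_{\sa}$, with inverse $y\mapsto v^*yv$.

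The second step is to check that $E=e_0$ has a $\kappa$-copy supply. Since $\kappa$ is infinite, $\kappa\setminus\{0\}$ has cardinality $\kappa$, so the projections $(e_\alpha)_{0<\alpha<\kappa}$ form $\kappa$ pairwise orthogonal projections. Each is orthogonal to $e_0$ and equivalent to $1\sim e_0$, so this is exactly the data required by Definition~\ref{defn:copysupply}.

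The third step is to transport the family. Put $x_\alpha=\phi(y_\alpha)\in(e_0Me_0)_+$. Its finite subsums are uniformly norm-bounded, because $\phi$ is isometric. Corollary~\ref{cor:copysupply} then gives a supremum $s$ of the finite subsums of $(x_\alpha)$ in $M_{\sa}$, and $s$ lies in $e_0Me_0$. Since $s$ is least among all self-adjoint upper bounds, it is in particular the supremum in $(e_0Me_0)_{\sa}$. Pulling back along the order isomorphism $\phi^{-1}$, the element $v^*sv$ is the supremum of the finite subsums of $(y_\alpha)$ in $M_{\sa}$. This proves $\kappa$-completeness, and Proposition~\ref{prop:SWcomplete} then yields $\kappa$-monotone completeness.

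The only delicate point is the last transfer. One must argue that a supremum in the corner, for the order of the corner, pulls back under a $*$-isomorphism $M\cong e_0Me_0$ to a supremum in $M$. This is a formal consequence of $\phi$ being an order isomorphism onto $(e_0Me_0)_{\sa}$. No appeal to Lemma~\ref{lem:corner} is needed, since Corollary~\ref{cor:copysupply} already provides the supremum in $M_{\sa}$, which restricts to the corner.
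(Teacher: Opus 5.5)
Your argument is correct, but it is not the route the paper takes for this statement. The paper's proof is a citation: it takes $\kappa$-completeness of a $\kappa$-homogeneous \AWstar-algebra directly from \cite[Theorem 2.5]{SW91} and then applies Proposition~\ref{prop:SWcomplete}.

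You instead derive $\kappa$-completeness from Corollary~\ref{cor:copysupply}, using $E=e_0$, the copy supply $(e_\alpha)_{0<\alpha<\kappa}$, and transport along the isomorphism $x\mapsto vxv^*$ of $M$ onto $e_0Me_0$. Each step checks out:
the copy supply, the uniform norm bound, the observation that a supremum in $M_{\sa}$ lying in $e_0Me_0$ is also the supremum in the corner, and the pull-back along the order isomorphism $y\mapsto v^*yv$.

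The paper records exactly this alternative in the remark right after the corollary, and explains why it is not used as the proof. The normality you import from \cite[Theorem A]{AH26} is proved there from the normality of \AWstar-factors \cite[Corollary 4.7]{SW91}. That result in turn rests on \cite[Theorem 2.5]{SW91}, which is the very theorem being recovered.

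So your argument is a valid deduction from the published literature. It shows that the projection-supply dilation reproduces the Sait\^{o}--Wright theorem once normality is available. It is not, however, an independent proof, and you should say so explicitly rather than present it as self-contained.

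The paper's citation keeps the dependency chain minimal: only \cite[Theorem 2.5]{SW91} plus the elementary Proposition~\ref{prop:SWcomplete}. Your route would become a genuinely new proof only if normality of the relevant \AWstar-algebras were established without \cite{SW91}.
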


\begin{proof}
    By~\cite[Theorem 2.5]{SW91} such an algebra is $\kappa$-complete.  Now apply Proposition~\ref{prop:SWcomplete}.
\end{proof}

Corollary~\ref{cor:SWhomogeneous} can also be recovered from Corollary~\ref{cor:copysupply}, applied to $E = e_0$ with the copy supply $(e_\alpha)_{\alpha \geq 1}$, together with Proposition~\ref{prop:SWcomplete}.  That route uses normality from~\cite{AH26}, which in turn rests on~\cite{SW91}, so it is not an independent proof.

Sait\^{o} and Wright~\cite[Theorem~2.1.14]{SW15} prove the $\kappa = \aleph_0$ case of the following proposition.  It is phrased there with a single faithful state, but a countable separating family of states can be turned into a single faithful state via a weighted sum.  In the version below, $\kappa$ separating states select a directed subset of cardinality at most $\kappa$ with the same supremum.

\begin{prop}\label{prop:stateselection}
    Let $\kappa$ be an infinite cardinal, and let $A$ be a $\kappa$-monotone complete unital \cstar-algebra admitting ordinary states $(\varphi_\alpha)_{\alpha < \kappa}$ that separate $A_+\setminus\{0\}$ from $0$.  Then every norm-bounded increasing net $(a_i)_{i \in I}$ in $A_{\sa}$ contains a directed subset $J\subseteq I$ with $|J| \leq \kappa$ and
    \[
        \sup_{j \in J}a_j = \sup_{i \in I}a_i .
    \]
    In particular $A$ is monotone complete.
\end{prop}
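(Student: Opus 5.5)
The plan is a maximality argument over the states, in the spirit of the countable case of \cite[Theorem~2.1.14]{SW15}, with $\kappa$-monotone completeness providing suprema of the small subnets that arise.

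For each $\alpha < \kappa$ and each $n \in \mathbb{N}$, choose an index $i_{\alpha,n} \in I$ with
\[
\varphi_\alpha(a_{i_{\alpha,n}}) > \sup_{i \in I}\varphi_\alpha(a_i) - 1/n ;
\]
the supremum is finite because the net is norm bounded. Let $J_0$ be the set of these indices, so $|J_0| \leq \kappa$. Close $J_0$ under a choice of upper bounds: fix a function $u$ assigning to each pair in $I$ a common upper bound, and let $J$ be the closure of $J_0$ under $u$, built in $\omega$ steps. Then $J$ is directed and still has $|J| \leq \kappa$, since $\kappa$ is infinite. By $\kappa$-monotone completeness, $s = \sup_{j \in J} a_j$ exists.

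The main step is to show that $s$ is an upper bound for all of $(a_i)_{i \in I}$; once that holds, $s$ is also the least upper bound, because any upper bound of the whole net dominates the subnet. So fix $i \in I$. Repeat the construction with $J_0 \cup \{i\}$ in place of $J_0$ to get a directed set $J' \supseteq J$ of cardinality at most $\kappa$ containing $i$, and let $s' = \sup_{j \in J'} a_j$. Then $s' \geq s$ and $s' \geq a_i$, so it suffices to prove $s' = s$. Since $s' - s \geq 0$ and the states separate $A_+\setminus\{0\}$ from $0$, it is enough to show $\varphi_\alpha(s' - s) = 0$ for every $\alpha$.

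This is where the obstacle lies, because the states are not normal, so $\varphi_\alpha(s)$ need not equal $\sup_{j \in J}\varphi_\alpha(a_j)$. I expect to get around this as follows. For each $\alpha$ we have
\[
\varphi_\alpha(s') \geq \varphi_\alpha(s) \geq \sup_{j \in J}\varphi_\alpha(a_j) = \sup_{i \in I}\varphi_\alpha(a_i),
\]
but the lower quantity need not control $\varphi_\alpha(s')$. The fix is to choose the selected indices so that the selected subnets themselves attain the values at the suprema. Run a transfinite or $\omega$-iteration: at each stage add, for every state, indices that nearly attain the value $\varphi_\alpha$ takes on the supremum of every directed subfamily of size at most $\kappa$ that extends the current one. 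I expect that this "value on suprema" is itself bounded by $\|a\|$. One can also normalise to a single faithful state as a weighted sum when $\kappa = \aleph_0$. In general I would instead follow the Saitô--Wright argument per state: show that if $s' \neq s$, then a strictly increasing chain of suprema of length $\kappa^+$ arises, and obtain a contradiction from the $\kappa$ separating states. The contradiction comes from the fact that for each $\alpha$ the values $\varphi_\alpha$ along the chain form an increasing bounded real sequence of length $\kappa^+$, which stabilises after fewer than $\kappa^+$ steps. Taking the supremum over the $\kappa$ stabilisation points, which is still below the regular cardinal $\kappa^+$, gives a stage after which all the $\varphi_\alpha$ are constant, and separation then forces the chain to be constant.

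So the concrete plan is this. Suppose, for contradiction, that no small directed $J$ works. Build $J_\xi$ for $\xi < \kappa^+$, each of size at most $\kappa$, increasing, with directed unions at limits, such that $s_{\xi+1} = \sup J_{\xi+1}$ satisfies $s_{\xi+1} > s_\xi$. This is possible because $s_\xi$ is not an upper bound of the net, so we may add some $a_i \not\leq s_\xi$. Strictness gives $\varphi_\alpha(s_{\xi+1}-s_\xi) > 0$ for some $\alpha$, while eventual constancy of all the $\varphi_\alpha(s_\xi)$ contradicts this. The case where the union index set is exhausted before $\kappa^+$ steps is handled trivially. Finally, the statement that $A$ is monotone complete follows because every bounded increasing net then has a supremum, namely that of a directed subset of size at most $\kappa$, which exists by $\kappa$-monotone completeness.
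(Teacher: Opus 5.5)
Your final ``concrete plan'' is a correct proof. It takes a different route from the paper.

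\textbf{The paper's route.} The paper makes a single selection, with no recursion. Write $a_K = \sup_{k\in K}a_k$ and put
\[
c_\alpha = \sup\{\varphi_\alpha(a_K) : K \subseteq I \text{ nonempty directed},\ |K| \le \kappa\}.
\]
Choose countably many such $K_{\alpha,n}$ per state with $\varphi_\alpha(a_{K_{\alpha,n}}) > c_\alpha - 1/n$, and close their union to a directed $J$. Then $\varphi_\alpha(a_J) = c_\alpha$ for every $\alpha$. For any $i$, the supremum $b$ over a small directed $K \supseteq J\cup\{i\}$ satisfies $b \ge a_J$ and $\varphi_\alpha(b) \le c_\alpha$, so separation gives $b = a_J$, and hence $a_i \le a_J$.

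This is essentially the ``fix'' sketched in your third paragraph. It needs no iteration, because $c_\alpha$ is defined over all small directed subsets rather than over extensions of the current one.

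\textbf{Your route.} You replace this maximization by an exhaustion along $(J_\xi)_{\xi<\kappa^+}$. The construction goes through:
\begin{itemize}
\item limit stages have size at most $\kappa$ because $|\lambda|\le\kappa$ for $\lambda<\kappa^+$;
\item unions of chains of directed sets are directed;
\item under the contradiction hypothesis every successor step is available.
\end{itemize}
The argument then uses that the $\kappa$ nondecreasing real functions $\xi\mapsto\varphi_\alpha(s_\xi)$ become constant simultaneously at some $\xi^*<\kappa^+$, by regularity of $\kappa^+$. Separation applied to $s_{\xi^*+1}-s_{\xi^*}\ge 0$ then gives the contradiction.

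\textbf{Comparison.} The two proofs share their ingredients: $\kappa$-monotone completeness for small directed subsets, closure to directed sets of size at most $\kappa$, and separation applied to the difference of two such suprema. Yours trades the global maximization for a transfinite recursion and a cofinality argument. The paper avoids both, and it produces $J$ explicitly in one step.

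\textbf{What to fix in the write-up.} You assert the stabilization step but do not prove it; include the justification. A nondecreasing map $f:\kappa^+\to\mathbb{R}$ has well-ordered, hence countable, range. If that range had no maximum, the least preimages of its values would form a countable cofinal subset of $\kappa^+$, contradicting $\operatorname{cf}(\kappa^+)>\aleph_0$. So $f$ is eventually constant; boundedness is not needed. Also drop the abandoned first selection and the speculative intermediate remarks, which play no role in the argument you actually carry out.
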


\begin{proof}
After translation and rescaling, $0 \leq a_i \leq 1$.  For every nonempty directed $K\subseteq I$ with $|K| \leq \kappa$, the supremum $a_K = \sup_{k \in K}a_k$ exists.  For $\alpha < \kappa$ put
\[
    c_\alpha = \sup\{\varphi_\alpha(a_K):K\subseteq I\text{ nonempty directed},\ |K| \leq \kappa\},
\]
and for each $n \geq 1$ choose such a $K_{\alpha,n}$ with $\varphi_\alpha(a_{K_{\alpha,n}})>c_\alpha-1/n$.  The union of the $K_{\alpha,n}$ has cardinality at most $\kappa$, and closing it under chosen upper bounds for pairs through countably many stages gives a directed $J\subseteq I$ with $|J| \leq \kappa$.  Put $a = a_J$.  Then $\varphi_\alpha(a) = c_\alpha$ for every $\alpha$: on one hand $a \geq  a_{K_{\alpha,n}}$ for every $n$, and on the other $J$ occurs in the definition of $c_\alpha$.

Fix $i \in I$, enlarge $J\cup\{i\}$ to a directed $K\subseteq I$ with $|K| \leq \kappa$, and put $b = a_K$.  Then $b \geq a$ and $b \geq a_i$, while $\varphi_\alpha(b) \leq c_\alpha = \varphi_\alpha(a)$ for every $\alpha$.  Since $b-a \geq 0$ and the states separate positive elements from zero, $b = a$, so $a_i \leq a$.  Any upper bound of the net dominates $(a_j)_{j \in J}$, and hence $a$.  Therefore $a = \sup_i a_i$.
\end{proof}

We next show that the hypotheses of Proposition~\ref{prop:stateselection} hold in every properly infinite \AWstar-factor with a corner admitting a faithful state.

\begin{lem}\label{lem:absorption}
    Let $q = q_1+q_2$ be projections in $M$ with $q_1\sim q_2\sim q$, and let $r$ be a projection orthogonal to $q$ with $r\precsim q$.  Then $q+r\sim q$.
\end{lem}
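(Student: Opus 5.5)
We prove $q+r\sim q$ by Schröder--Bernstein for projections in an \AWstar-algebra, so it is enough to establish $q\precsim q+r$ and $q+r\precsim q$. The first is trivial because $q\leq q+r$. All the work lies in the second, and we get it by placing $q$ and $r$ side by side inside $q$.

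Since $q_1\sim q$, there is a partial isometry $u$ with $u^*u=q$ and $uu^*=q_1$. Because $r\precsim q$, choose $r_0\leq q$ with $r\sim r_0$. Then $r\sim r_0\sim u r_0 u^*$, and $u r_0 u^*$ is a subprojection of $q_1$. Hence $r\precsim q_1$. Next, $q_2\sim q$, so $q\precsim q_2$ (indeed $q\sim q_2$). The projections $q$ and $r$ are orthogonal, and so are $q_1$ and $q_2$. We therefore have subprojections $r'\leq q_1$ and $q'\leq q_2$ with $r\sim r'$ and $q\sim q'$, and $r'\perp q'$. Finite additivity of equivalence for orthogonal pairs (the two-term case of Fact~\ref{fact:addability}) gives
\[
q+r\sim q'+r'\leq q_2+q_1=q ,
\]
so $q+r\precsim q$.

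Schröder--Bernstein for equivalence of projections in \AWstar-algebras \cite{Ber72} now turns the two subequivalences into $q+r\sim q$. Every step is a direct use of the definitions or of a standard \AWstar-fact, so no real obstacle arises. The one point to be careful about is that the two images sit in the orthogonal pieces $q_1$ and $q_2$, since that orthogonality is exactly what lets additivity apply.

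If we prefer to avoid Schröder--Bernstein, we can build the equivalence directly. Write $r\sim r'\leq q_1$ and use $q_2\sim q$ to get $q+r\sim q_2+r'$. Then absorb the difference $q_1-r'$ by repeating the argument in an Eilenberg-swindle fashion. The Schröder--Bernstein route is the shorter one, and it is the one we take.
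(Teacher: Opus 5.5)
Your proposal is correct and is essentially the paper's proof: the paper places a copy $r'\leq q_2$ of $r$ and uses $q\sim q_1$ to get $q+r\sim q_1+r'\leq q$, then applies Schröder--Bernstein, while you swap the roles of $q_1$ and $q_2$. The closing Eilenberg-swindle paragraph is not needed for the argument you actually carry out.
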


\begin{proof}
    Choose $r' \leq q_2$ with $r'\sim r$.  Since $q\sim q_1$, $r\sim r'$, $q\perp r$ and $q_1\perp r'$, we get $q+r\sim q_1+r' \leq q$.  Together with $q \leq q+r$, the Schröder–Bernstein theorem for projections~\cite{Kap51} gives $q+r\sim q$.
\end{proof}

\begin{lem}\label{lem:faithfulcorner}
    Let $p,r \in \Proj(M)$ with $r\ne0$ and $r\precsim p$.  If $pMp$ admits a faithful state, then so does $rMr$.
\end{lem}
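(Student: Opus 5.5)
Since $r\precsim p$, we can fix a partial isometry $v\in M$ with $v^*v = r$ and $vv^* = r' \leq p$. The plan is to move the faithful state from $pMp$ down to the subcorner $r'Mr'$, and then across the equivalence from $r'$ to $r$.

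Let $\varphi$ be a faithful state on $pMp$. Because $r' \leq p$, the corner $r'Mr'$ is contained in $pMp$. We restrict $\varphi$ to $r'Mr'$ and get a positive functional $\varphi|_{r'Mr'}$, which is faithful because $\varphi$ is. Its value at the unit $r'$ is $\varphi(r')$, and this is strictly positive: $r' \neq 0$ (it is equivalent to $r\neq 0$), so faithfulness gives $\varphi(r')>0$. Hence
\[
\psi_0(y) = \varphi(y)/\varphi(r') \qquad (y \in r'Mr')
\]
is a faithful state on $r'Mr'$.

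Next we transport $\psi_0$ to $rMr$ along $v$. The map $x \mapsto vxv^*$ from $rMr$ to $r'Mr'$ is a $*$-isomorphism, with inverse $y \mapsto v^*yv$. Indeed, for $x\in rMr$ we have $v^*vxv^*v = rxr = x$, and symmetrically $vv^*yvv^* = y$ for $y \in r'Mr'$. It is multiplicative because $v^*v = r$ acts as the unit on $rMr$, it preserves adjoints, and it sends the unit $r$ to $r'$. We therefore define $\psi(x) = \psi_0(vxv^*)$ for $x\in rMr$. This is a state on $rMr$, because the map is a unital positive map followed by a state. It is faithful: if $x \geq 0$ and $\psi(x) = 0$, then $vxv^*\ge 0$ and $\psi_0(vxv^*) = 0$, so $vxv^* = 0$, and therefore $x = v^*(vxv^*)v = 0$.

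There is no real obstacle here. The only points to check are that $\varphi(r') \neq 0$, which follows from faithfulness and $r'\neq0$, and that conjugation by $v$ really is a unital $*$-isomorphism between the two corners. Both are the routine verifications above.
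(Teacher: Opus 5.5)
Your proof is correct and follows essentially the same route as the paper. The paper likewise picks $r'\le p$ with $v^*v=r$ and $vv^*=r'$, sets $\psi_r(x)=\psi(r')^{-1}\psi(vxv^*)$, and gets faithfulness from $x=v^*(vxv^*)v$; you just spell out the restriction to $r'Mr'$ and the $*$-isomorphism check in more detail.
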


\begin{proof}
    Let $\psi$ be a faithful state on $pMp$, choose $r' \leq p$ with $r'\sim r$ and a partial isometry $v$ with $v^*v = r$ and $vv^* = r'$.  Then $\psi(r')>0$, and $\psi_r(x) = \psi(r')^{-1}\psi(vxv^*)$ is a state on $rMr$.  If $x \in (rMr)_+$ and $\psi_r(x) = 0$, then $vxv^* = 0$, so $x = v^*(vxv^*)v = 0$.
\end{proof}

\begin{prop}\label{prop:faithfulcorner}
    Let $M$ be an \AWstar-factor, and suppose there is a nonzero projection $p \in  M$ such that $pMp$ admits a faithful ordinary state.  Then $M$ is monotone complete.  If $M$ is properly infinite, then for some infinite cardinal $\kappa$ the algebra $M$ is $\kappa$-homogeneous and admits $\kappa$ ordinary states separating $M_+\setminus\{0\}$ from $0$.
\end{prop}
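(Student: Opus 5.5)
The proof splits according to the type decomposition of the factor $M$. A factor is either finite or properly infinite, since its center is trivial. If $M$ is finite, then $M$ is monotone complete by Remark~\ref{rem:haagerup}. The reason is that a finite \AWstar-factor has a center-valued trace, which is a scalar tracial state; this is Berberian's dimension theory in the factor case. Alternatively, Theorem~\ref{mainthm:C} applies directly, because the scalar dimension $\Delta_M$ is bounded. In more detail, for a bounded increasing net we pass to increments via Proposition~\ref{prop:SWcomplete}: it suffices to show that $M$ is $\kappa$-complete for every $\kappa$. Bounded summable families are then handled by grouping the summands into finitely many blocks whose supports have bounded total dimension. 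This grouping is the delicate point. In the finite case I would instead simply invoke~\cite[Lemma~4]{Ber83} through the existence of a center-valued trace, and not use $p$ at all.

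Now let $M$ be properly infinite. First I take $p$ to be infinite. If $p$ is finite, then $p \precsim 1$, and Lemma~\ref{lem:faithfulcorner} in the reverse direction is unavailable. However, $p\oplus$ copies of $p$ can be used: by comparability in a factor, there are maximal orthogonal families of copies of $p$. Let $\kappa$ be the cardinality of a maximal orthogonal family $(p_\alpha)$ of projections equivalent to $p$. By maximality and comparability, the remainder $r = 1-\bigvee_\alpha p_\alpha$ satisfies $r \precsim p$. The family is infinite, since otherwise $1$ would be a finite sum of finite projections together with a piece $\precsim p$, and hence finite. Choose a partition of $\kappa$ into $\kappa$ pieces of size $\kappa$, and put $e_\beta = \bigvee_{\alpha \in S_\beta} p_\alpha$. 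Then all $e_\beta$ are equivalent by Fact~\ref{fact:addability}. The remainder $r$ is absorbed into one of them by Lemma~\ref{lem:absorption}, after splitting $e_0$ into two halves, each equivalent to $e_0$; this again uses a partition of $S_0$ and addability. Finally, $e_0 \sim 1$: re-indexing $\kappa\times\kappa\cong\kappa$ and using addability gives $1 \sim e_0 + r \sim e_0$. This yields $\kappa$-homogeneity.

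The states come from transporting $\psi$ along the partial isometries $v_\alpha$ with $v_\alpha^* v_\alpha = p_\alpha$ and $v_\alpha v_\alpha^*=p$. I set $\varphi_\alpha(x) = \psi(v_\alpha x v_\alpha^*)$. I also set $\varphi_r(x)=\psi(wxw^*)$, where $w$ maps $r$ into $p$, and extend these to states by normalization. The resulting family has cardinality $\kappa$. For separation, take $x\ge0$ with all $\varphi_\alpha(x)=0$ and $\varphi_r(x)=0$. Faithfulness of $\psi$ gives $x^{1/2}p_\alpha=0$ for all $\alpha$ and $x^{1/2}r=0$. Lemma~\ref{lem:annihilator} then gives $x^{1/2}\cdot 1=0$, so $x=0$.

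Monotone completeness in the properly infinite case follows by combining these results. Corollary~\ref{cor:SWhomogeneous} shows that $M$ is $\kappa$-monotone complete, and Proposition~\ref{prop:stateselection} then shows that $M$ is monotone complete. The main obstacle is the cardinal bookkeeping in the homogeneity step, in particular verifying $e_0\sim1$ via addability and absorption. Replacing a finite $p$ by a suitable infinite one is unnecessary, because the copies-of-$p$ construction works for any nonzero $p$ in a properly infinite factor.
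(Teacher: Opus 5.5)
\textbf{The finite case has a genuine gap.} Berberian's dimension theory~\cite{Ber72} gives only the center-valued dimension $\Delta_M$ on projections. It does not give a center-valued trace on $M$. For a type $\mathrm{II}_1$ \AWstar-factor, the existence of a tracial state is bound up with Kaplansky's open problem of whether such factors are \wstar-algebras. Remark~\ref{rem:haagerup} uses a center-valued trace only as a hypothesis, not as a known fact.

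Your fallback through Theorem~\ref{mainthm:C} fails too. That theorem needs $\sup_F\sum_{i\in F}\Delta_M(\supp x_i)<\infty$. The summands of a general bounded family, or the increments of a general net, can all have support $1$ (e.g.\ $x_n=2^{-n}1$), so no regrouping into finitely many blocks gives that bound. An argument that does ``not use $p$ at all'' would show that every finite \AWstar-factor is monotone complete, and that is not known.

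The faithful state on $pMp$ is exactly what the finite case needs. If $M$ is of type $\mathrm{II}_1$, then $pMp$ is a type $\mathrm{II}_1$ \AWstar-factor with a faithful state, hence a \wstar-algebra by~\cite[Corollary 7]{Wri76}. Since $p$ has central cover $1$ in a factor, $M$ is a \wstar-algebra by~\cite[Corollary 1]{EGW83}. Finite type $\mathrm{I}$ factors are matrix algebras.

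\textbf{The properly infinite case needs a fix.} Your route matches the paper's: a maximal orthogonal family of copies of $p$, blocks of size $\kappa$, Fact~\ref{fact:addability} and Lemma~\ref{lem:absorption}, transported states, Lemma~\ref{lem:annihilator}, then Corollary~\ref{cor:SWhomogeneous} and Proposition~\ref{prop:stateselection}. However, your proof that the maximal family is infinite works only when $p$ is finite.

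For infinite $p$ the argument fails. If $p=1$, which is allowed whenever $M$ itself has a faithful state, the singleton $\{1\}$ is already a maximal orthogonal family of copies of $p$. Then $\kappa=1$, and the partition into $\kappa$ blocks of size $\kappa$ collapses.

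Since you claim the construction works for every nonzero $p$, you need the paper's device. Proper infiniteness gives orthogonal projections $e_n\sim 1$ $(n<\omega)$~\cite[Section 17, Theorem 1]{Ber72}. Choose $p_n\le e_n$ with $p_n\sim p$, and extend this infinite family to a maximal one by Zorn's lemma. Your preliminary discussion about replacing $p$ by an infinite projection can then be dropped.
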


\begin{proof}
    If $M$ is finite of type $\mathrm{II}_1$, then $pMp$ is a type $\mathrm{II}_1$ \AWstar-factor with a faithful ordinary state, hence a \wstar-algebra by~\cite[Corollary 7]{Wri76}.  Every nonzero projection of a factor has central cover $1$, so $M$ is a \wstar-algebra by~\cite[Corollary 1]{EGW83}.  The finite type $\mathrm{I}$ case is classical.
    
    Let $M$ be properly infinite.  By~\cite[Section 17, Theorem 1]{Ber72} there are pairwise orthogonal projections $(e_n)_{n < \omega}$ with $e_n \sim 1$.  Since $p\precsim e_n$, choose $p_n \leq e_n$ with $p_n\sim p$, and extend $(p_n)$ by Zorn's lemma to a maximal orthogonal family $(p_i)_{i \in I}$ of projections equivalent to $p$.  Put $\kappa = |I| \geq \aleph_0$, $q = \bigvee_i p_i$ and $r = 1-q$.  By maximality $p\not\precsim r$, so generalized comparability~\cite[Section 14, Corollary 1]{Ber72}, specialized to the factor $M$, gives $r\precsim p$.
    
    Partition $I$ into sets $(I_\alpha)_{\alpha < \kappa}$ of cardinality $\kappa$ and put $Q_\alpha = \bigvee_{i \in I_\alpha}p_i$.  Since all $p_i$ are equivalent, Fact~\ref{fact:addability} and bijections between index sets of cardinality $\kappa$ show that $Q_\alpha\sim q$, that $q-Q_0 = \bigvee_{\alpha \geq 1}Q_\alpha\sim q$, and that $Q_0$ splits as a sum of two projections each equivalent to $Q_0$.  As $r\precsim p\precsim Q_0 \leq q$, Lemma~\ref{lem:absorption} gives $1 = q+r\sim q$ and $Q_0+r\sim Q_0\sim q \sim 1$.  Thus $Q_0+r$ and the $Q_\alpha$ with $\alpha \geq 1$ are $\kappa$ pairwise orthogonal projections, each equivalent to $1$, with join $1$, and $M$ is $\kappa$-homogeneous.
    
    By Lemma~\ref{lem:faithfulcorner}, each $p_i M p_i$ admits a faithful state $\psi_i$, and if $r\ne0$ then $rMr$ admits a faithful state $\psi_r$.  Put $\varphi_i (x) = \psi_i (p_i x p_i)$ and, when $r\ne0$, $\varphi_r(x) = \psi_r(rxr)$.  If $x \geq 0$ and all these states vanish at $x$, then $x^{1/2}p_i = 0$ for every $i$ and $x^{1/2}r = 0$; since $1 = r\vee\bigvee_i p_i$, Lemma~\ref{lem:annihilator} gives $x = 0$.  Repeating states if necessary, this is a $\kappa$-indexed separating family.  Monotone completeness now follows from Corollary~\ref{cor:SWhomogeneous} and Proposition~\ref{prop:stateselection}.
\end{proof}

\section{Join-preserving homomorphisms}\label{sec:homomorphisms}

Join-preserving $*$-homomorphisms are the usual morphisms in the theory of \AWstar-algebras~\cite{Wid56} (see~\cite{HL,HR14} for categorical treatments).  By a lemma of Feldman and Fell~\cite[Lemma~2]{FF57} (see also~\cite[Lemma~2.18]{Gow}), if a $*$-homomorphism preserves the join of an orthogonal family $(p_i)$, then it preserves the join of every orthogonal family $(q_i)$ with $q_i \sim p_i$ for each $i$.

\begin{defn}\label{defn:joinpreserving}
    Let $\kappa$ be a cardinal.  A $*$-homomorphism $\pi:M\to N$ between \AWstar-algebras is \textbf{$\kappa$-join-preserving} if $\pi(\bigvee_i p_i) = \bigvee_i \pi(p_i)$ for every orthogonal family $(p_i)$ of at most $\kappa$ projections in $M$, and \textbf{join-preserving} if this holds for every orthogonal family.  A \cstar-subalgebra $M$ of an \AWstar-algebra $N$ is an \textbf{\AWstar-subalgebra} if $M$ is an \AWstar-algebra and the inclusion is join-preserving.
\end{defn}

\begin{proof}[Proof of Theorem~\ref{mainthm:B}]
    After rescaling we may assume $\sum_{i \in F}x_i \leq  \tfrac{1}{2}E$.  The algebras $M$ and $N$ are normal by~\cite[Theorem A]{AH26}.  Theorem~\ref{mainthm:A} gives pairwise orthogonal projections $(q_i)$ with $E q_i E = x_i$, such that $x = EqE$ is the supremum of the finite subsums, where $q = \bigvee_i q_i$.  Since $\pi$ is $|I|$-join-preserving, $\pi(q) = \bigvee_i \pi(q_i)$, and the $\pi(q_i)$ are pairwise orthogonal.  Proposition~\ref{prop:joinsup} in $N$, with the projection $\pi(E)$, gives
    \[
    \pi(x) = \pi(E)\pi(q)\pi(E) = \sup_{N,\;F  \fin I}\ \sum_{i \in F}\pi(E)\pi(q_i)\pi(E) = \sup_{N,\;F  \fin I}\ \sum_{i \in F}\pi(x_i). \qedhere
    \]
\end{proof}

Theorem~\ref{mainthm:B} is of interest when $N$ is an arbitrary \AWstar-algebra, and because it requires only the joins of families of cardinality $|I|$ to be preserved.

\begin{rem}\label{rem:encoding}
    The proof uses the dilation of Theorem~\ref{mainthm:A} as an encoding: the order sum of the noncommuting elements $x_i$ is written as the compression of the join of the orthogonal projections $q_i$.  A $*$-homomorphism carries compressions to compressions, and an $|I|$-join-preserving one carries this particular join to the corresponding join in $N$, where normality converts it back into an order sum.  Only the joins of families of cardinality $|I|$ enter the hypothesis.
\end{rem}

\begin{cor}\label{cor:homogeneous}
    Let $\kappa$ be an infinite cardinal, let $M$ be a $\kappa$-homogeneous \AWstar-algebra, let $N$ be an \AWstar-algebra, and let $\pi:M\to N$ be a $\kappa$-join-preserving $*$-homomorphism.  Then $\pi(\sup_ja_j) = \sup_{N}\pi(a_j)$ for every norm-bounded increasing net $(a_j)_{j \in J}$ in $M_{\sa}$ with $|J| \leq \kappa$.  In particular, a $*$-homomorphism from a properly infinite \AWstar-algebra into an \AWstar-algebra that preserves joins of countable orthogonal families preserves suprema of bounded increasing sequences.
\end{cor}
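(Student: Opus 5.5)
The plan is to reduce the net to a well-ordered chain and then to a summable family of increments, encode the chain as an order sum in a corner with a copy supply, and apply Theorem~\ref{mainthm:B}. Write $1=\bigvee_{\alpha<\kappa}e_\alpha$ with $e_\alpha\sim1$, and put $E=e_0$. Fix partial isometries $v$ with $v^*v=1$ and $vv^*=E$. Then $\operatorname{Ad}v$ is an order isomorphism $M\to EME$ of \cstar-algebras, and $\pi(v)$ is a partial isometry in $N$ with $\pi(v)^*\pi(v)=1$. Hence $\operatorname{Ad}\pi(v)$ is an order isomorphism from $N$ onto $\pi(E)N\pi(E)$. By Lemma~\ref{lem:corner}, suprema in that corner agree with suprema in $N_{\sa}$. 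It therefore suffices to prove $\pi(\sup_j b_j)=\sup_N\pi(b_j)$ for bounded increasing nets $(b_j)$ in $EME$ with $|J|\le\kappa$; for these we may translate and rescale so that $0\le b_j\le\tfrac12E$ (translation is harmless by \eqref{eq:translate}). The projections $(e_\alpha)_{\alpha\ge1}$ are orthogonal to $E$ and equivalent to $E$, so they form a $\kappa$-copy supply for $E$. Moreover $M$ is $\kappa$-monotone complete by Corollary~\ref{cor:SWhomogeneous}, so all the suprema below exist.

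Next I would pass from nets to chains. Following the proof of Lemma~\ref{lem:directed}, write the directed index set as an increasing union of directed subsets of smaller cardinality. Argue by induction on $|J|$ that $\pi$ preserves the supremum of every bounded increasing net of size $<|J|$. Then $\pi(s_\alpha)=\sup_N\pi(D_\alpha)$ for the partial suprema $s_\alpha$. A direct check shows that, once the chain statement is known, $\pi(\sup D)=\sup_\alpha\pi(s_\alpha)$ is the supremum of $\pi(D)$. So the task reduces to chains $(a_\xi)_{\xi<\mu}$ with $\mu\le\kappa$ a regular cardinal (passing to a cofinal well-ordered subchain as in Proposition~\ref{prop:SWcomplete}).

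For a chain, I would replace $a_\xi$ at limit ordinals by $\tilde a_\lambda=\sup_{\eta<\lambda}a_\eta$, exactly as in Proposition~\ref{prop:SWcomplete}, and form the increments $d_\xi=\tilde a_{\xi+1}-\tilde a_\xi\ge0$ in $EME$. Their finite subsums are bounded by $\tfrac12E-\tilde a_0\le\tfrac12E$. By Lemma~\ref{lem:telescoping}, $\sup_\xi a_\xi=\tilde a_0+\sup_F\sum_{F}d_\xi$. The family $(d_\xi)$ has at most $\kappa$ members and has the projection supply $(e_\alpha)_{\alpha\ge1}$, so Theorem~\ref{mainthm:B} yields $\pi(\sup_F\sum_F d_\xi)=\sup_{N,F}\sum_F\pi(d_\xi)$.

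The main obstacle is transporting Lemma~\ref{lem:telescoping} to $N$. Its converse direction requires that the images $\pi(\tilde a_\lambda)$ at limit stages are the suprema $\sup_{N,\eta<\lambda}\pi(a_\eta)$, since only then does the continuity hypothesis hold for $(\pi(\tilde a_\xi))$ in $N$. I would handle this by transfinite induction along $\mu$, applying the whole argument to the shorter chains $(a_\eta)_{\eta<\lambda}$: each such chain has a cofinal subchain of regular type $\operatorname{cf}(\lambda)<\mu$, so this is an induction over the regular cardinal $\mu$. With that in hand, Lemma~\ref{lem:telescoping} in $N$ (its last assertion, with $s=\pi(\sup_F\sum d_\xi)$) gives $\pi(\sup_\xi a_\xi)=\pi(\tilde a_0)+s=\sup_N\pi(\tilde a_\xi)=\sup_N\pi(a_\xi)$. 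The final equality holds because $(\pi(\tilde a_\xi))$ and $(\pi(a_\xi))$ interleave and so have the same upper bounds.

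For the last sentence of the statement, a properly infinite \AWstar-algebra is $\aleph_0$-homogeneous by \cite[Section 17, Theorem 1]{Ber72}, so we apply the result with $\kappa=\aleph_0$.
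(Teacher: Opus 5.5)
Your proposal is correct and follows the paper's proof essentially step for step. You apply Theorem~\ref{mainthm:B} to the increments, use induction over regular $\mu$ to make $(\pi(\tilde a_\xi))$ continuous at limits so that Lemma~\ref{lem:telescoping} applies in $N$, and then use Markowsky's decomposition for directed sets. The only organizational difference is that you move the whole net into $EME$ at the outset, whereas the paper transfers only the summable family to the corner.

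One small correction is needed. $\pi$ need not be unital, so $\pi(v)^*\pi(v)=\pi(1)$ rather than $1$, and $\operatorname{Ad}\pi(v)$ is an order isomorphism of $\pi(1)N\pi(1)$ (not of $N$) onto $\pi(E)N\pi(E)$. This is harmless: Lemma~\ref{lem:corner} identifies suprema in $\pi(1)N\pi(1)$ with suprema in $N$, which is exactly how the paper's proof handles it.
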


\begin{proof}
    The suprema in $M$ exist by Corollary~\ref{cor:SWhomogeneous}.
    
    Let $(x_i)_{i \in I}$ be positive with finite subsums bounded above, where $|I| \leq \kappa$.  Let $(e_\alpha)_{\alpha < \kappa}$ be as in Definition~\ref{defn:homogeneous}, put $E = e_0$, and choose a partial isometry $v$ with $v^*v = 1$ and $vv^* = E$.  Put $x_i' = v x_i v^* \in EME$ and choose an injection $\iota:I\to\kappa\setminus\{0\}$.  The projections $e_{\iota(i)}$ form a projection-supply for $(x'_i)$, since $\supp(x'_i) \leq E \sim e_{\iota(i)}$.  By Theorem~\ref{mainthm:B}, $\pi(\sup_Fx'_F) = \sup_N\pi(x'_F)$, and this supremum lies in $\pi(E)N\pi(E)$.  Now $\operatorname{Ad}v^*$ is an order isomorphism of $EME$ onto $M$ carrying $x'_F$ to $x_F$, and $y\mapsto\pi(v)^*y\pi(v)$ is an order isomorphism of $\pi(E)N\pi(E)$ onto $\pi(1)N\pi(1)$ carrying $\pi(x'_F)$ to $\pi(x_F)$.  Together with Lemma~\ref{lem:corner}, this gives $\pi(\sup_Fx_F) = \sup_N\pi(x_F)$.
    
    A norm-bounded chain either has a largest element or has a cofinal well-ordered subchain whose order type is an infinite regular cardinal~\cite{Mar76}.  A cofinal subchain has the same upper bounds, and so does its image under $\pi$, so it suffices to treat families $(a_\xi)_{\xi < \mu}$ with $\mu \leq \kappa$ regular, by induction on $\mu$.  Replace $a_\lambda$, for limit $\lambda < \mu$, by $\tilde a_\lambda = \sup_{\eta < \lambda}a_\eta$; the resulting family $(\tilde a_\xi)$ is mutually cofinal with the original one, and by the induction hypothesis $\pi(\tilde a_\lambda) = \sup_N\{\pi(a_\eta):\eta < \lambda\}$.  Put $d_\xi = \tilde a_{\xi+1}-\tilde a_\xi$ and $b_\xi = \pi(\tilde a_\xi)$.  Since $\tilde a_\eta \leq a_\eta \leq \tilde a_{\eta+1}$, the families $(b_\eta)_{\eta < \lambda}$ and $(\pi(a_\eta))_{\eta < \lambda}$ are mutually cofinal at every limit $\lambda$, and hence $b_\lambda = \sup_{\eta < \lambda}b_\eta$.  By the first step the finite subsums of $(d_\xi)$ have a supremum $s$ with $\pi(s) = \sup_N\sum_{\xi \in F}\pi(d_\xi)$.  Lemma~\ref{lem:telescoping} in $M$ shows that $\tilde a_0+s$ is the supremum of $(a_\xi)$.  The family $(b_\xi)$ is therefore increasing and continuous at limits, with increments $\pi(d_\xi)$, so Lemma~\ref{lem:telescoping} in $N$ shows that $b_0+\pi(s)$ is the supremum of $(b_\xi)$, which is also the supremum of $(\pi(a_\xi))$.  Hence $\pi(\sup_\xi a_\xi) = \sup_N\pi(a_\xi)$.
    
    We argue by induction on the cardinality of a norm-bounded directed set $D\subseteq M_{\sa}$ with $|D| \leq \kappa$.  By~\cite[Theorem 1]{Mar76}, $D$ is the union of an increasing family $(D_\alpha)_{\alpha < |D|}$ of directed subsets of smaller cardinality.  By induction, $\pi(\sup D_\alpha) = \sup_N\pi(D_\alpha)$, and $(\sup D_\alpha)_\alpha$ is a chain whose supremum is $\sup D$.  The previous step gives $\pi(\sup D) = \sup_N\pi(\sup D_\alpha) = \sup_N\pi(D)$.  Applying this to the range of a net proves the first assertion.  The last assertion is the case $\kappa = \aleph_0$.
\end{proof}

\begin{cor}\label{cor:normalhom}
    Let $\kappa$ be an infinite cardinal.  Suppose $M$ is a $\kappa$-homogeneous \AWstar-algebra admitting $\kappa$ ordinary states that separate $M_+\setminus\{0\}$ from $0$.  Then $M$ is monotone complete, and every $\kappa$-join-preserving $*$-homomorphism $\pi$ from $M$ into an \AWstar-algebra $N$ satisfies $\pi(\sup_i a_i) = \sup_N\pi(a_i)$ for every norm-bounded increasing net $(a_i)$ in $M_{\sa}$.
\end{cor}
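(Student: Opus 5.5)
Monotone completeness of $M$ follows directly from earlier results: Corollary~\ref{cor:SWhomogeneous} makes $M$ $\kappa$-monotone complete, and Proposition~\ref{prop:stateselection}, applied with the $\kappa$ separating states, then shows that $M$ is monotone complete. Thus every norm-bounded increasing net $(a_i)_{i\in I}$ in $M_{\sa}$ has a supremum $a=\sup_i a_i$. The remaining task is to show that $\pi(a)$ is the supremum of $(\pi(a_i))$ in $N$, and the approach is to reduce to nets of cardinality at most $\kappa$, where Corollary~\ref{cor:homogeneous} already applies.

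First, apply the selection part of Proposition~\ref{prop:stateselection} to $(a_i)$. This gives a directed subset $J\subseteq I$ with $|J|\leq\kappa$ and $\sup_{j\in J}a_j=a$. The restricted net $(a_j)_{j\in J}$ is norm-bounded, increasing and indexed by a directed set of cardinality at most $\kappa$. Corollary~\ref{cor:homogeneous} then yields $\pi(a)=\sup_{N,\,j\in J}\pi(a_j)$.

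Second, compare upper bounds in $N$. Since $\pi$ is positive and $a_i\leq a$ for every $i\in I$, the element $\pi(a)$ dominates every $\pi(a_i)$. Conversely, suppose $b\in N_{\sa}$ dominates all $\pi(a_i)$ with $i\in I$. Then in particular it dominates all $\pi(a_j)$ with $j\in J$, so $b\geq\pi(a)$ by the first step. Hence $\pi(a)$ is the least upper bound of $(\pi(a_i))_{i\in I}$ in $N_{\sa}$.

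The only delicate point is that the reduction happens entirely in the domain. The set $J$ is chosen so that its supremum in $M$ equals the full supremum, and this is what the state-selection argument guarantees. The codomain needs no states and no completeness beyond what is already used inside Corollary~\ref{cor:homogeneous}. No cofinality of $J$ in $I$ is needed; the argument only uses that $J\subseteq I$ and that the two suprema agree in $M$. I expect no real obstacle beyond checking that Proposition~\ref{prop:stateselection} indeed returns a directed subset of the index set, which it does as stated.
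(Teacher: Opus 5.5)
Your proposal is correct and follows essentially the same route as the paper. The paper also gets monotone completeness from Corollary~\ref{cor:SWhomogeneous} and Proposition~\ref{prop:stateselection}, uses the selected directed subset $J$ with $|J|\le\kappa$ to invoke Corollary~\ref{cor:homogeneous}, and then compares upper bounds in $N$ exactly as you do.
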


\begin{proof}
    By Corollary~\ref{cor:SWhomogeneous} and Proposition~\ref{prop:stateselection}, $M$ is monotone complete, and a norm-bounded increasing net $(a_i)_{i \in I}$ contains a directed subset $J\subseteq I$ with $|J| \leq \kappa$ and $\sup_{j \in J}a_j = \sup_{i \in I}a_i = a$.  By Corollary~\ref{cor:homogeneous}, $\pi(a) = \sup_N\{\pi(a_j):j \in J\}$.  Since $\pi(a)$ dominates every $\pi(a_i)$, and every upper bound of $(\pi(a_i))_{i \in I}$ dominates $(\pi(a_j))_{j \in J}$, we get $\pi(a) = \sup_N\pi(a_i)$.
\end{proof}

Under the hypotheses of Corollary~\ref{cor:normalhom}, every orthogonal family of nonzero projections in $M$ has cardinality at most $\kappa$, since each state is positive on at most countably many members of such a family.  Hence $\kappa$-join preservation is equivalent to join preservation there, and when $N$ is a \wstar-algebra the conclusion also follows from~\cite[9.34]{SZ}.  The content of Corollary~\ref{cor:normalhom} lies in allowing $N$ to be an arbitrary \AWstar-algebra.

\begin{rem}\label{rem:sharp}
    The cardinal in Corollary~\ref{cor:normalhom} cannot be lowered if suitable large cardinals exist.  For an infinite cardinal $\kappa$, the algebra $B(\ell^2(\kappa))$ is $\kappa$-homogeneous, and the vector states of an orthonormal basis are $\kappa$ states separating $B(\ell^2(\kappa))_+\setminus\{0\}$ from $0$.  Blecher and Weaver~\cite{BW} show that $B(\ell^2(\kappa))$ admits a singular pure state that is additive on countable orthogonal families of projections if and only if $\kappa$ is Ulam measurable, and one that is additive on orthogonal families of fewer than $\kappa$ projections if and only if $\kappa$ is measurable.  Let $\varphi$ be such a state, with GNS representation $(\pi_\varphi,H_\varphi,\xi_\varphi)$, and let $(p_n)$ be an orthogonal family of projections of the relevant size, with join $p$.  For every unitary $u$, the projections $u^*p_nu$ are orthogonal with join $u^*pu$, so
    \[
    \Bigl\langle\Bigl(\pi_\varphi(p)-\bigvee_n\pi_\varphi(p_n)\Bigr)\pi_\varphi(u)\xi_\varphi,\ \pi_\varphi(u)\xi_\varphi\Bigr\rangle=\varphi(u^* p u)-\sum_n\varphi(u^* p_n u)=0 .
    \]
    The operator in the brackets is positive, and the vectors $\pi_\varphi(u)\xi_\varphi$ span a dense subspace of $H_\varphi$, so it vanishes.  Thus $\pi_\varphi$ preserves the joins of these families, but $\pi_\varphi$ is not normal since $\varphi$ is singular.  Consequently, if $\kappa$ is Ulam measurable, preservation of joins of countable orthogonal families does not suffice in Corollary~\ref{cor:normalhom}, and if $\kappa$ is measurable, neither does preservation of joins of orthogonal families of fewer than $\kappa$ projections.
\end{rem}

\begin{cor}\label{cor:subalgebra}
    Let $M$ be an \AWstar-subalgebra of an \AWstar-algebra $N$.  If $M$ is $\kappa$-homogeneous, then every bounded increasing net in $M_{\sa}$ of cardinality at most $\kappa$ has the same supremum in $M$ and in $N$.  If in addition $M$ admits $\kappa$ separating ordinary states, then $M$ is monotone complete and every supremum in $M_{\sa}$ of a bounded increasing net is also its supremum in $N_{\sa}$.  In particular, for every properly infinite \AWstar-subalgebra $M$ of $N$, suprema of bounded increasing sequences in $M$ are suprema in $N$.
\end{cor}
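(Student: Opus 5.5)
The plan is to apply the inclusion map $\iota:M\to N$ as the homomorphism $\pi$ in Corollary~\ref{cor:homogeneous} and Corollary~\ref{cor:normalhom}. Since $M$ is an \AWstar-subalgebra, $\iota$ is join-preserving, hence $\kappa$-join-preserving for every $\kappa$. One point needs checking first: the unit of $M$ may not be assumed to be the unit of $N$, so $\iota$ is a $*$-homomorphism with $\iota(1_M)=1_M$, a projection of $N$. The proofs of Corollary~\ref{cor:homogeneous} and Theorem~\ref{mainthm:B} never use unitality of $\pi$. They only use that $\pi$ is a $*$-homomorphism carrying compressions to compressions and preserving the relevant joins, together with Lemma~\ref{lem:corner} to pass from the corner $\pi(1)N\pi(1)$ to $N_{\sa}$. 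So I will treat this as already covered, and remark on it only briefly.

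For the first assertion, take a bounded increasing net $(a_j)_{j\in J}$ in $M_{\sa}$ with $|J|\le\kappa$. Its supremum $a$ in $M$ exists by Corollary~\ref{cor:SWhomogeneous}. Corollary~\ref{cor:homogeneous}, applied to $\pi=\iota$, gives $a=\iota(a)=\sup_N a_j$. Hence the supremum in $M$ is also the supremum in $N$.

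For the second assertion, assume in addition that $M$ admits $\kappa$ separating ordinary states. Then Corollary~\ref{cor:normalhom}, applied to $\iota$, gives both the monotone completeness of $M$ and the identity $\iota(\sup_i a_i)=\sup_N a_i$ for every bounded increasing net. For the final assertion, if $M$ is properly infinite then it is $\aleph_0$-homogeneous by~\cite[Section 17, Theorem 1]{Ber72}. The first assertion with $\kappa=\aleph_0$ then covers bounded increasing sequences.

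The only possible obstacle is the unit issue above. If $1_M\neq 1_N$, I would regard $\iota$ as a map into $1_MN1_M$. This is an \AWstar-algebra whose joins of subprojections of $1_M$ agree with those in $N$, by the remark from~\cite{Ber72} in Section~\ref{sec:prelim}. So $\iota$ is a unital join-preserving $*$-homomorphism into it. Suprema computed in $1_MN1_M$ are then suprema in $N_{\sa}$ by Lemma~\ref{lem:corner}.
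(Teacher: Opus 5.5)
Your proposal is correct and is the paper's argument: it applies Corollaries~\ref{cor:homogeneous} and~\ref{cor:normalhom} to the inclusion, and obtains the final assertion as the case $\kappa=\aleph_0$ via $\aleph_0$-homogeneity of properly infinite \AWstar-algebras. Your remark on the case $1_M\neq 1_N$, where you corestrict to $1_MN1_M$ and then apply Lemma~\ref{lem:corner}, is a sound detail that the paper leaves implicit, since the proof of Corollary~\ref{cor:homogeneous} already works in $\pi(1)N\pi(1)$.
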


\begin{proof}
    Apply Corollaries~\ref{cor:homogeneous} and~\ref{cor:normalhom} to the inclusion.
\end{proof}

\begin{cor}\label{cor:finitesource}
    Let $M$ be a finite \AWstar-algebra and let $(x_i)_{i \in I}\subseteq M_+$ have uniformly norm-bounded finite subsums.  Put $E = \bigvee_i \supp(x_i)$ and suppose
    \[
        \sum_{i \in F}\Delta_M(\supp x_i) \leq \Delta_M(1-E) \qquad(F \fin I).
    \]
    Then every $|I|$-join-preserving $*$-homomorphism $\pi$ from $M$ into an \AWstar-algebra $N$ satisfies
    \[
        \pi\Bigl(\sup_F\sum_{i \in F}x_i\Bigr) = \sup_{N,\;F}\ \sum_{i \in F}\pi(x_i).
    \]
    In particular, the conclusion holds if $\sum_{i \in F}\Delta_M(\supp x_i) \leq  \tfrac{1}{2}\,1$ for every finite $F$.
\end{cor}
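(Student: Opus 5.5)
The plan is to reduce Corollary~\ref{cor:finitesource} directly to Theorem~\ref{mainthm:B}, using Lemma~\ref{lem:finitesupply} to produce the projection-supply inside $M$. By that lemma, the displayed hypothesis already implies two things. First, each $x_i$ lies in $EME$, since $\supp(x_i)\le E$ and $x_i$ is positive. Second, there are pairwise orthogonal projections $R_i\le 1-E$ with $R_i\sim\supp(x_i)$. Each $R_i$ is orthogonal to $E$ and satisfies $\supp(x_i)\precsim R_i$, so $(R_i)$ is a projection-supply for $(x_i)$ in the sense of Definition~\ref{defn:supply}, relative to the projection $E$.

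Next I would apply Theorem~\ref{mainthm:B} to $M$, $N$, $E$, $(x_i)$ and $\pi$. The finite subsums are uniformly norm-bounded by assumption, and $\pi$ is $|I|$-join-preserving. The conclusion of Theorem~\ref{mainthm:B} is exactly the displayed identity. Its proof uses normality of both $M$ and $N$. I would note that, as in the proof of Theorem~\ref{mainthm:B}, this comes from~\cite[Theorem A]{AH26}. Normality of $M$ alone would follow from~\cite{Wri80}, since $M$ is finite, but $N$ is arbitrary, so the general result is needed for the codomain. The existence of the supremum in $M$ is part of Theorem~\ref{mainthm:A}, which is invoked inside Theorem~\ref{mainthm:B}.

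For the final sentence, the second part of Lemma~\ref{lem:finitesupply} shows that the bound $\sum_{i\in F}\Delta_M(\supp x_i)\le \tfrac12\,1$ implies the main hypothesis. Lemma~\ref{lem:subadditive} gives $\Delta_M(E)\le\tfrac12\,1$, hence $\Delta_M(1-E)\ge\tfrac12\,1$.

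There is essentially no obstacle. The one point to check is that the supply $R_i$ really sits orthogonal to $E$ as a projection in $M$, and not merely in some amplification as in the proof of Theorem~\ref{mainthm:C}. This holds because Lemma~\ref{lem:packing} was applied with $R=1-E$ inside $M$ itself, so no matrix amplification is needed and $\pi$ is applied to $M$ directly.
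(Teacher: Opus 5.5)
Your proposal is correct and matches the paper's proof: Lemma~\ref{lem:finitesupply} places each $x_i$ in $EME$ and gives a projection-supply $R_i\le 1-E$ inside $M$, Theorem~\ref{mainthm:B} then yields the identity, and the final sentence is the second part of Lemma~\ref{lem:finitesupply}. You are also right that normality of the arbitrary codomain $N$ still rests on~\cite[Theorem A]{AH26}, even though normality of the finite domain $M$ alone would follow from~\cite{Wri80}.
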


\begin{proof}
    By Lemma~\ref{lem:finitesupply}, each $x_i$ lies in $EME$ and $(x_i)$ has a projection-supply in $1-E$.  Apply Theorem~\ref{mainthm:B}.
\end{proof}

\begin{rem}\label{rem:homknown}
    When $N$ is the regular monotone completion of a normal \AWstar-algebra $M$, the inclusion preserves all existing suprema by~\cite{Ham81}, and in the von Neumann setting countably additive $*$-homomorphisms were studied in~\cite{BuH95}.  
    For a type I \AWstar-algebra $B$, Hamana asks whether an \AWstar-subalgebra of $B$ is automatically monotone closed in $B$, and notes that a positive answer in general would make every normal \AWstar-algebra monotone complete~\cite[Remark 4(i)]{Ham83}.  Corollary~\ref{cor:subalgebra} gives a domain-side criterion for monotone closedness in arbitrary \AWstar-algebras, upgrading the previously established monotone completeness by showing the suprema computed in $M$ are the suprema of the ambient algebra.  In Corollary~\ref{cor:finitesource} the codomain need not be finite, and the projection-supply lies in $1-E$ and is not required to contain copies of $E$, so neither a finite-codomain metric argument nor the matrix-unit mechanism of~\cite{SW91} gives the stated conclusion.  When $N$ is a \wstar-algebra the result also follows from~\cite[9.33]{SZ}, so Corollary~\ref{cor:subalgebra} is of interest when $N$ is an arbitrary \AWstar-algebra.
\end{rem}

See also Sait\^{o}'s work on \AWstar-algebras with the monotone convergence property~\cite{Sai79}.

\end{document}